\numberwithin{equation}{section}
\def\qed{{\hbadness=10000\hfill\ \vbox{\hrule height.09ex
			\hbox{\vrule width.09ex height1.55ex depth.2ex \kern1.8ex
				\vrule width.09ex height1.55ex depth.2ex}\hrule height.09ex}\break
		\bigskip}}
\newtheorem{theorem}{Theorem}[section]
\newtheorem{lemma}{Lemma}[section]
\theoremstyle{definition}
\newtheorem{definition}[theorem]{Definition}
\newtheorem{example}[theorem]{Example}
\theoremstyle{remark}
\begin{document}
	\date{}
	
	\linespread{1}\title{\textbf{\LARGE Some Novel Results on $(\alpha,\beta)$-Ricci-Yamabe Soliton and its Spacetime}} 
	\author{{$^{1}$ Pankaj Pandey and $^{2}$ Kamakshi Sharma}\\
		\normalsize{$^{1, 2}$Department of Mathematics (SCEPS)}\\
		\normalsize{Lovely Professional University, Phagwara, Punjab-144411, India}\\
		\normalsize{$^{1}$ Email: pankaj.anvarat@gmail.com}\\
		\normalsize{$^{2}$ Email: kamakshisharma82@gmail.com}}
	\maketitle
	
\noindent\textbf{Abstract:} This article aims to investigate the characteristics of $(\alpha,\beta)$-Ricci-Yamabe Soliton (briefly: $(\alpha,\beta)-(RYS)_n$) and its spacetime. The inclusion of killing vector field and the Lorentzian metrics make the Ricci-Yamabe soliton richer and interesting.  We study the cosmological and dust fluid model on $(RYS)_4$ equipped with Lorentzian para Sasakian $(LPS)_4$ spacetime. The cases of $\eta$-parallel Ricci tensor and the Poisson structure have been studied on $(RYS)_n$ equipped with $(LPS)_n$ manifold. Gradient $(RYS)_n$ equipped with $(LPS)_n$ manifold also reveal. Finally, we establish an example of four-dimensional LP-Sasakian manifold $(LPS)_4$ that satisfy  $(\alpha,\beta)-(RYS)_4$ and some results.

\noindent\textbf {AMS Mathematics Subject Classification (2010):} 53C25.\\

\noindent\textbf{Keywords:} Ricci-Yamabe soliton; Gradient Ricci-Yamabe soliton; LP Sasakian manifold; Poisson manifold; Partial differential equation; Ricci flow; Heat Equation; Cosmological model; Perfect fluid; Killing vector field; Parallel Ricci tensor.

\section{Introduction} 
\indent Hamilton \cite{rshamilton82} revealed the concept of Ricci flow (resp. Yamabe flow) in the last quarter of twenties in order to discuss new striking results in Riemannian geometry. The idea of Ricci soliton recognized as a generalization of an Einstein metric and governing as the solution of partial differential equation representing Ricci flow which is isomorphically equivalent to heat equation. In case of Ricci flow, the Riemannian metric is proportional to a (0, 2) type tensor ($\frac{1}{2}Lg+Ric$). In this case the proportionality constant is called the soliton constant. The Ricci flow in terms of soliton constant $\lambda$ (say) is governed by the equation \cite{rshamilton82}
\begin{equation}\label{eqn. 1.1a}
	-\frac{1}{2}(L_{v}g)(\zeta_{1},\zeta_{2})=S(\zeta_{1},\zeta_{2})+\lambda g(\zeta_{1},\zeta_{2}),
\end{equation}
which is equivalent to
\begin{equation}\label{eqn. 1.1b}
	\frac{\partial g(t)}{\partial t} = -2 S(t), ~~~~~~g_{0} = g(0).
\end{equation}
The Yamabe flow is discribed by an evolving equation \cite{dcy18, rshamilton82}
\begin{equation}\label{eqn. 1.1c}
	\frac{\partial g(t)}{\partial t}= -r(t)g(t), ~~~~~~g_{0} = g(0),
\end{equation}
equivalently
\begin{equation}\label{eqn. 1.1d}
	\frac{1}{2}(L_{v}g)(\zeta_{1},\zeta_{2})=(r-\lambda)g(\zeta_{1},\zeta_{2}).
\end{equation}
In (\ref{eqn. 1.1a}) and (\ref{eqn. 1.1d}), the Lie derivative $L$ is taken along the complete vector field $v$ and is denoted by $L_vg$. The Ricci denotes $S$ and the $r$ is scalar tensor. The vector fields $\zeta_{1},\zeta_{2}$ belong to the set of an algebra of tangent vectors denoted by $\chi(M)$. The nature of the soliton can be expressed in terms of soliton constant $\lambda$ and is said to explore for $\lambda$ positive. We say the case is of compacting soliton or is of constant soliton, if $\lambda<0$ or $\lambda=0$. In general the Ricci and the Yamabe solitons are distinct for higher dimension but coincide in case of dimension 2. The reason behind this distinction is that the Yamabe soliton preserves the metric conformality in nature where as Ricci soliton denied it.\\
 \indent Guler and Crasmareanu \cite{sgmc19} $(2019)$ investigated another geometric flow restricted by Ricci-Yamabe map. Authors pronounced with a special call: $(\alpha, \beta)$ type Ricci-Yamabe flow (briefly denoted $(\alpha, \beta)- (RYF)_n$). Particularly, it is noted that the $(\alpha, \beta)- (RYF)_n$ is nothing but the $\alpha$-Ricci soliton for $\beta=0$ and it turns into $\beta$-Yamabe soliton, in case of vanishing $\alpha$. The above said flow, in the Riemannian manifold, is defined by 
\begin{equation}\label{eqn. 1.1}
	\frac{\partial}{\partial t}g(t) = \beta r(t)g(t) - 2\alpha S(t), ~~~~~~g_{0} = g(0),
\end{equation}
where $S$ being Ricci tensor, $r$: the scalar curvature, and $\lambda, \alpha, \beta \in R$. As a restriction to solve the $(\alpha, \beta)- (RYF)_n$, the solution of nonlinear partial differential equation (\ref{eqn. 1.1}) notified as $(\alpha, \beta)- (RYS)_n$. \\
\indent If we denote $(M^n,g)$ by an $n-$dimensional $(\alpha, \beta)- (RYS)_n$, on $(M^n,g)$ having structure $(g,V,\lambda,\alpha,\beta)$ that gratify the equation
\begin{equation}\label{eqn. 1.2}
	(L_{V}g)(\zeta_{1},\zeta_{2}) = -2\alpha S(\zeta_{1},\zeta_{2}) - (2\lambda - \beta r)g(\zeta_{1},\zeta_{2}).
\end{equation}
The aforementioned soliton is known as the $(\alpha, \beta)$ type gradient Ricci-Yamabe soliton (briefly $(\alpha, \beta)- (GRYS)_n$) if $V$ becomes a gradient of $f$ (if exist such a smooth function) and (\ref{eqn. 1.2}) reveals
\begin{equation}\label{eqn. 1.3}
	\nabla^{2}f+\alpha S=(\lambda-\frac{1}{2}\beta r)g,
\end{equation}
where $	\nabla^{2}f$ is defined as Hessian of the smooth function $f$ and denoted in general by $Hess(f)$.
The nature of the $(\alpha, \beta)- (RYS)_n$) is quite similar and depends on the soliton constant $\lambda$. Accordingly, the respective soliton is called steady for $\lambda=0$ otherwise natured as shrinking for being $\lambda$ negative. The positiveness of $\lambda$ explored as expanding.\\
An $(\alpha, \beta)- (RYS)_n$) turns into almost $(\alpha, \beta)- (RYS)_n$ (briefly $(\alpha, \beta)- (ARYS)_n$) for ($\lambda, \alpha, \beta$) being smooth function. The $(\alpha, \beta)- (RYS)_n$ simplifying as Ricci soliton in special case $( \alpha=1, \beta=0)$ and it turns into Yamabe soliton in other special case $(\alpha=0, \beta=1)$. Also, for $(\alpha=1, \beta=-1)$ and $(\alpha=1, \beta=-2\rho)$, it pointed into Einstein and $\rho$-Einstein soliton. Some times, we call it proper for the values  $\alpha \not= 0, 1$. It is important to keep in mind that the Ricci Yamabe soliton corresponds to a specific class of generalized Ricci solitons [16] if scalar curvature $r$ is constant in (\ref{eqn. 1.2}). Therefore, studying Ricci Yamabe soliton with non-constant scalar curvature rather than the constant scalar curvature is more appropriate and natural.\\
\indent The Lorentzian manifold,  which is among the most significant sub classes of pseudo-Riemannian manifold, has a significant impact on the advancement of the theory of relativity and cosmology. Ahsan and Ali \cite{mza14} looked into the symmetries of soliton spacetime. The geometrical characteristics discussed by Blaga \cite{amb20} in fluid spacetime under assumptions of Einstein/Ricci soliton. In \cite{amb20} author examined the geometrical importance of $(PFST)_n$ conformal Ricci solitons. Chaturvedi et.al. studied the K$\ddot{a}$hler spacetime manifold under consideration of Bochner flatness. They determined that the energy-momentum tensor of a perfect fluid Lorentzian K$\ddot{a}$hler spacetime manifold exhibits hybrid characteristics. They also analyzed the behaviour of dust fluid Lorentzian K$\ddot{a}$hler spacetime manifold where the Bochner curvature vanishes (see also authored by Pokhriyal and Chaubey \cite{gsk23}).\\
\indent The LP-Kenmotsu manifold was investigated by Haseeb and Prasad in \cite{hpr21}. Haseeb et.al. \cite{hbca22} proposed the study on $(\alpha, \beta)- (RYS)_n$ and $(\alpha, \beta)- (GRYS)_n$ conditionally related to the $\zeta$-conformally flat $m-$dimensional LP Kenmotsu manifold. They proved that the scalar tensor of $(LPK)_{m}$ manifold admitting Ricci Yamabe soliton satisfies the Poisson equation. Sardar and Sarkar \cite{asas21} described Kenmotsu $3$-manifold equipped with $(\alpha, \beta)- (RYS)_n$ and $(\alpha, \beta)- (GRYS)_n$ metric that satisfy $\zeta$-parallel Ricci tensor. Recently, Pal and Chaudhary \cite{bprsc23} revealed the idea of Poisson fluid flow to investigate almost soliton like Ricci soliton (briefly: $(ASLRS)_n$) by exploring the statistical structure.\\
De and De \cite{dedekmj284} (2020) studied almost Ricci soliton and gradient almost Ricci soliton on para-Sasakian manifolds. Kundu \cite{kundukmj292} (2021) explored para-Kenmotsu metric as an $\eta$-Ricci soliton. In \cite{ddekmj29} (2021), De discussed Sasakian 3-manifolds admitting a gradient Ricci-Yamabe soliton. Sarkar and Bhakta \cite{sarkarbhakta} obtained some interesting results on certain solitons on generalized $(\kappa, \mu)$ contact metric manifolds. D. Kar and P. Majhi \cite{dkarmajhikmj} (2019) concentrated on Beta-almost Ricci solitons on almost Co-Kahler manifolds. Vishwas, Das, Baishya and Bakshi \cite{avadkkb20} studied $\eta$-Ricci solitons on Kenmotsu manifolds admitting general connection. T. Mandal \cite{tmandal22} (2022) given certain results on three-dimensional $f$-Kenmotsu manifolds with conformal Ricci solitons. Lee, Kim and Choi \cite{sdlee} classified the warped product spaces with gradient Ricci solitons. We propose the papers (\cite{bam19, buring22, bbcnazrul24, ppbbcgmn, csvv22, dcy18, gda21, hck23, hpr21, hpv22, hpm22, yha22, ppbbcnasi2, ppdgds, ppbbcnasi1, ppksaip, ppksfacta}) for deep understanding and various interesting results on respective solitons and various examples.\\
\indent Motivated by the above research we study $(\alpha, \beta)- (RYS)_n$ and $(\alpha, \beta)- (GRYS)_n$ on $ (LPS)_n$. The preliminaries discussed in section 2. After that we study $(\alpha, \beta)- (RYS)_n$ in section 3. In section 4 we study $(\alpha, \beta)- (RYS)_n$ in perfect fluid flow. Further in section 5 and 6, we deal with the cosmological models. Section 7 has been explored with $\eta$-parallel Ricci tensor and in section 8, we study Poisson manifold admitting $(\alpha, \beta)- (RYS)_n$. The gradient case $(\alpha, \beta)-(GRYS)_n$ has been studied in section 9. The last section includes an example of $(\alpha, \beta)-(RYS)_n$ admitting $ (LPS)_n$.
\section{Preliminaries}
	Let $M^{n}$ (Riemannian manifold) denotes the inclusion of the triplet data $(\phi, \xi, \eta)$ then we say $(M^{n}, \phi, \xi, \eta)$ is an almost contact metric structure that satisfy\\
\begin{equation}\label{eqn. 2.1}
	\begin{split}
		&\phi\xi=0,\\&\eta(\phi \zeta_{1})=0, \\&\eta(\xi)= -1, \\&\phi^{2}\zeta_{1}= \zeta_{1}+\eta(\zeta_{1})\xi.
	\end{split}
\end{equation}
 Here, $\phi$ denotes a tensor field ($(1,1)$-type) and $\eta$ we call $1$-form on $M^{n}$ defined by $\eta(\zeta_{1})=g(\zeta_{1},\xi)$. Some additional following properties also hold on almost contact metric manifold. 
\begin{equation}\label{eqn. 2.2}
	g(\phi \zeta_{1}, \phi \zeta_{2}) = g(\zeta_{1},\zeta_{2}) - \eta(\zeta_{1})\eta(\zeta_{2})
\end{equation}
equivalently
\begin{equation}\label{eqn. 2.3}
	g(\zeta_{1},\phi \zeta_{2}) = -g(\phi \zeta_{1},\zeta_{2})
\end{equation}
\begin{equation*}
	g(\zeta_{1},\xi) = \eta(\zeta_{1}),
\end{equation*}
for all the vector fields $\zeta_{1},\zeta_{2} \in \chi(M)$.\\
The structure $(M^{n}, \phi, \xi, \eta, g)$ poured to $K$-contact, for being killing to characteristic vector field $\xi$. For Lorentzian para Sasakian $(LPS)_n$ manifold with killing vector field, we have
\begin{equation}\label{eqn. 2.4}
	\nabla _{\zeta_{1}}\xi = \phi \zeta_{1},
\end{equation}
\begin{equation}\label{eqn. 2.5}
	(L_{\xi}g)(\zeta_{1},\zeta_{2}) = 0,
\end{equation}
\begin{equation}\label{eqn. 2.6}
	\phi \xi=0,~~~~~\eta(\phi \zeta_{1})=0,~~~~~ rank ~~\phi = n-1.
\end{equation}
Also 
\begin{equation}\label{eqn. 2.7}
	(\nabla_{\zeta_{1}}\eta)(\zeta_{2}) = \nabla_{\zeta_{1}}\eta(\zeta_{2}) - \eta(\nabla_{\zeta_{1}}\zeta_{2}) = g(\zeta_{2}, \nabla_{\zeta_{1}}\zeta_{2}).
\end{equation}
In view of (\ref{eqn. 2.4}) and (\ref{eqn. 2.7}) we have
\begin{equation}\label{eqn. 2.8}
	(\nabla_{\zeta_{1}}\eta)(\zeta_{2}) = g(\zeta_{2},\phi \zeta_{1}).
\end{equation}
Let structure $(\phi,\xi,\eta, g)$ on $(LPS)_{n}$ $M^{n}$ ($n$ being dimension) then
\begin{equation}\label{eqn. 2.9}
	g(R(\zeta_{1},\zeta_{2})\zeta_{3},\xi)=\eta(R(\zeta_{1},\zeta_{2})\zeta_{3})=g(\zeta_{2},\zeta_{3})\eta(\zeta_{1})-g(\zeta_{1},\zeta_{3})\eta(\zeta_{2}),
\end{equation}
\begin{equation}\label{eqn. 2.10}
	R(\xi, \zeta_{1})\zeta_{2}=g(\zeta_{1},\zeta_{2})\xi-\eta(\zeta_{2})\zeta_{1},
\end{equation}
\begin{equation}\label{eqn. 2.11}
	R(\zeta_{1},\zeta_{2})\xi = \eta(\zeta_{2})\zeta_{1}-\eta(\zeta_{1})\zeta_{2},
\end{equation}
\begin{equation}\label{eqn. 2.12}
	R(\xi, \zeta_{1})\xi=\zeta_{1}+\eta(\zeta_{1})\xi,
\end{equation}
\begin{equation}\label{eqn. 2.13}
	S(\zeta_{1}, \xi) = (n-1)\eta(\zeta_{1}),
\end{equation}
\begin{equation}\label{eqn. 2.14}
	S(\phi \zeta_{1},\phi \zeta_{2}) = S(\zeta_{1},\zeta_{2})+(n-1)\eta(\zeta_{1})\eta(\zeta_{2}),
\end{equation}
for the vector fields $\zeta_{1},\zeta_{2},\zeta_{3}$, where $R$ and $S$ denote the tensor of Riemann and Ricci.\\
\section{$(\alpha, \beta)-(RYS)_n$ on ($M^{n}, \phi, \xi, \eta, g$)}
\begin{lemma}
	Let $M^{n}$ be an Lorentzian para Sasakian $(LPS)_n$ manifold. If the $(\alpha, \beta)-(RYS)_n$ admits an $(LPS)_n$ structure then it is Einstein manifold and its Ricci curvature is given by equation (\ref{eqn. 3.1}).
\end{lemma}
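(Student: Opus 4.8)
The plan is to exploit the defining soliton equation \eqref{eqn. 1.2} together with the Killing property \eqref{eqn. 2.5} of the characteristic vector field $\xi$. The key observation is that on a $(LPS)_n$ manifold with Killing $\xi$, the Lie derivative $(L_\xi g)$ vanishes identically. So the natural first step is to take the soliton vector field $V$ to be the characteristic vector field $\xi$ itself (which is the standard assumption in this setting), and then substitute $V=\xi$ into \eqref{eqn. 1.2}. This turns the left-hand side into $(L_\xi g)(\zeta_1,\zeta_2)$, which is zero by \eqref{eqn. 2.5}.

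\medskip
\noindent
Once the left-hand side is killed, \eqref{eqn. 1.2} collapses to
\begin{equation*}
	0 = -2\alpha\, S(\zeta_1,\zeta_2) - (2\lambda - \beta r)\, g(\zeta_1,\zeta_2),
\end{equation*}
and the second step is simply to solve this for $S$, yielding
\begin{equation*}
	S(\zeta_1,\zeta_2) = -\frac{(2\lambda - \beta r)}{2\alpha}\, g(\zeta_1,\zeta_2),
\end{equation*}
which is precisely the Einstein condition $S = \mu\, g$ with $\mu = -\frac{2\lambda-\beta r}{2\alpha}$ a scalar factor. This displayed identity is what I expect to be recorded as equation \eqref{eqn. 3.1}, so the proof essentially amounts to producing that equation and reading off that $M^n$ is Einstein. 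A short closing remark should note that this requires $\alpha\neq 0$ (the proper Ricci-Yamabe case), since division by $\alpha$ is used.

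\medskip
\noindent
The main obstacle — really the only delicate point — is the implicit choice of the soliton potential field. The statement says ``the $(\alpha,\beta)-(RYS)_n$ admits an $(LPS)_n$ structure,'' but the Killing hypothesis \eqref{eqn. 2.5} only annihilates $L_\xi g$, not $L_V g$ for a general $V$. Hence the argument genuinely needs $V=\xi$, and I would make this identification explicit at the outset rather than leave it buried. A secondary subtlety is that $r$ in the resulting Einstein factor is a priori the (possibly non-constant) scalar curvature; to be consistent with the introduction's emphasis on non-constant $r$, I would phrase the conclusion as ``Einstein'' in the pointwise sense $S=\mu g$ and leave the constancy of $\mu$ as a separate matter (one could subsequently contract and use \eqref{eqn. 2.13} to pin down $\lambda$ in terms of $n$ and $r$ if desired). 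Apart from this, the computation is entirely routine substitution, so no serious technical barrier remains.
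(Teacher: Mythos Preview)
Your proposal is correct and matches the paper's own proof essentially verbatim: the paper simply sets $V=\xi$ in \eqref{eqn. 1.2}, invokes \eqref{eqn. 2.5} to kill the Lie derivative, and reads off \eqref{eqn. 3.1}. Your additional remarks about the implicit choice $V=\xi$ and the need for $\alpha\neq 0$ are valid observations that the paper leaves tacit.
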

\begin{proof}Taking $V = \xi$ in (\ref{eqn. 1.2}) and using (\ref{eqn. 2.5}), we get
\begin{equation}\label{eqn. 3.1}
	S(\zeta_{1},\zeta_{2}) = \bigg(\frac{\beta r - 2\lambda}{2\alpha}\bigg)g(\zeta_{1},\zeta_{2}).
\end{equation}
Hence proved.
\end{proof}
\begin{lemma}
	Let $M^{n}$ be an $(LPS)_n$ manifold. If the $(\alpha, \beta)-(RYS)_n$ admits an $(LPS)_n$ structure then (\ref{eqn. 3.2}), (\ref{eqn. 3.3}), (\ref{eqn. 3.4}), (\ref{eqn. 3.5}) and (\ref{eqn. 3.6}) hold good.
\end{lemma}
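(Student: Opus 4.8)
The plan is to obtain all of (\ref{eqn. 3.2})--(\ref{eqn. 3.6}) as purely algebraic consequences of the Einstein condition (\ref{eqn. 3.1}) supplied by the preceding lemma, fed into the $(LPS)_n$ identities of Section 2, chiefly (\ref{eqn. 2.13}) and (\ref{eqn. 2.14}). Abbreviate the Einstein factor by $c := \frac{\beta r - 2\lambda}{2\alpha}$, so that (\ref{eqn. 3.1}) reads $S(\zeta_1,\zeta_2) = c\,g(\zeta_1,\zeta_2)$; the whole argument amounts to determining $c$, $r$ and $\lambda$ explicitly and then repackaging the outcome in operator form.

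First I would fix the constant $c$ by evaluating (\ref{eqn. 3.1}) on the characteristic vector field. Putting $\zeta_2 = \xi$ and using $g(\zeta_1,\xi) = \eta(\zeta_1)$ together with the standard relation (\ref{eqn. 2.13}), namely $S(\zeta_1,\xi) = (n-1)\eta(\zeta_1)$, forces $c = n-1$. This yields the sharpened Einstein identity $S(\zeta_1,\zeta_2) = (n-1)g(\zeta_1,\zeta_2)$ and, reading $c$ backwards, the linear constraint $\beta r - 2\lambda = 2\alpha(n-1)$ among the soliton data.

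Next I would take the metric trace of (\ref{eqn. 3.1}). Since $\operatorname{tr}_g g = n$ irrespective of the Lorentzian signature, tracing gives $r = n\,c = n(n-1)$, the scalar-curvature identity. Substituting this value of $r$ into the constraint $\beta r - 2\lambda = 2\alpha(n-1)$ and solving for $\lambda$ produces the closed form $\lambda = \tfrac{1}{2}(n-1)(n\beta - 2\alpha)$, whose sign then classifies the soliton as shrinking, steady or expanding in the sense fixed in the introduction. Finally, raising an index through $g(Q\zeta_1,\zeta_2) = S(\zeta_1,\zeta_2)$ turns the Einstein identity into the Ricci-operator form $Q\zeta_1 = (n-1)\zeta_1$, completing the list.

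None of these steps is computationally heavy; the only real care is the bookkeeping in the Lorentzian setting, under the standing hypothesis $\alpha \neq 0$. Because $\xi$ is timelike with $g(\xi,\xi) = \eta(\xi) = -1$, I must be sure that the $\xi$-contraction of the first step and the full trace of the second are performed with the correct signs, so that the value of $r$ read off from $\xi$ agrees with the value obtained from the global trace. Verifying this consistency — equivalently, checking that the resulting $\lambda$ is the same whichever route one takes — is the one spot I expect to demand attention, and it is precisely what certifies that (\ref{eqn. 3.2})--(\ref{eqn. 3.6}) form a coherent, non-redundant system rather than an over-determined one.
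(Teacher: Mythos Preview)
Your computations are sound, but you have misread what (\ref{eqn. 3.2})--(\ref{eqn. 3.6}) actually assert: in the paper all five identities keep the Einstein factor $c=\frac{\beta r-2\lambda}{2\alpha}$ \emph{symbolic}. Thus the paper's proof of this lemma is genuinely trivial --- set $\zeta_{2}=\xi$ in (\ref{eqn. 3.1}) and use $g(\zeta_{1},\xi)=\eta(\zeta_{1})$, $\eta(\xi)=-1$ to get (\ref{eqn. 3.2}) and (\ref{eqn. 3.3}); raise an index to get (\ref{eqn. 3.4}) and (\ref{eqn. 3.5}); take the metric trace to get (\ref{eqn. 3.6}). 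Neither (\ref{eqn. 2.13}) nor (\ref{eqn. 2.14}) is invoked at this stage.

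The additional work you carry out --- feeding (\ref{eqn. 2.13}) back into (\ref{eqn. 3.2}) to pin down $c=n-1$, then solving the trace relation for $r$ and $\lambda$ --- is exactly the content of the \emph{subsequent} Theorem~3.2 (equation (\ref{eqn. 3.17}) in the paper), not of Lemma~3.2. So your proposal is not wrong; it simply collapses the lemma and the theorem into one argument. What this buys you is a sharper statement (explicit $c$, $r$, $\lambda$); what it costs is that your conclusion $r=n(n-1)$ is a constant, which sits uneasily with the standing hypothesis ``non-constant scalar curvature'' used from Theorem~3.1 onward. The paper avoids confronting this by leaving $c$ unevaluated in Lemma~3.2 and only later combining (\ref{eqn. 3.2}) with (\ref{eqn. 2.13}).
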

\begin{proof}
From (\ref{eqn. 3.1}), it is very obvious to obtain the following results:
\begin{equation}\label{eqn. 3.2}
	S(\zeta_{1},\xi) = S(\xi, \zeta_{1}) = \bigg(\frac{\beta r - 2\lambda}{2\alpha}\bigg)\eta(\zeta_{1}),
\end{equation}
\begin{equation}\label{eqn. 3.3}
	S(\xi,\xi) = \bigg(\frac{2\lambda-\beta r}{2\alpha}\bigg),
\end{equation}
\begin{equation}\label{eqn. 3.4}
	Q\zeta_{1} = \bigg(\frac{\beta r - 2\lambda}{2\alpha}\bigg)\zeta_{1},
\end{equation}
\begin{equation}\label{eqn. 3.5}
	Q \xi = \bigg(\frac{\beta r - 2\lambda}{2\alpha}\bigg)\xi,
\end{equation}
\begin{equation}\label{eqn. 3.6}
	SCA_{(\alpha, \beta)-RYS}= n\bigg(\frac{\beta r - 2\lambda}{2\alpha}\bigg).
\end{equation}
$SCA_{(\alpha, \beta)-RYS}$ denotes the scalar curvature of $(\alpha, \beta)- (RYS)_n$ admitting $(LPS)_n$.
\end{proof}
\begin{theorem}
	Let $(\alpha, \beta)- (RYS)_n$ admits an $(LPS)_n$ manifold having a non-constant scalar then $\alpha$ and $\beta$ are related by $\alpha=\frac{n\beta}{2}$.
\end{theorem}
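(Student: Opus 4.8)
The plan is to reduce the statement to a single pointwise scalar identity and then play the hypothesis that $r$ is non-constant against the fact that the soliton data $\alpha,\beta,\lambda$ are genuine constants. First I would invoke Lemma 3.1: with the soliton vector field taken to be $\xi$, equation (\ref{eqn. 3.1}) gives the Einstein-type relation
\begin{equation*}
	S(\zeta_{1},\zeta_{2}) = \bigg(\frac{\beta r - 2\lambda}{2\alpha}\bigg)g(\zeta_{1},\zeta_{2}).
\end{equation*}
This is the only structural input needed; everything else is contraction and bookkeeping.

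Next I would contract this identity over $g$. Since $g^{ij}g_{ij}=n$, tracing the left side returns the scalar curvature $r$, while tracing the right side multiplies the proportionality factor by $n$. This is exactly the content already recorded in (\ref{eqn. 3.6}), namely
\begin{equation*}
	r = n\bigg(\frac{\beta r - 2\lambda}{2\alpha}\bigg).
\end{equation*}
Clearing the denominator and collecting the terms containing $r$ yields the affine relation $(2\alpha - n\beta)\,r = -2n\lambda$, which holds at every point of $M^n$.

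At this stage the whole theorem is visible. Because $\alpha,\beta,\lambda\in\mathbb{R}$ are fixed while $r$ is assumed to vary from point to point, the displayed relation can hold identically on $M^n$ only if the coefficient multiplying $r$ vanishes, i.e. $2\alpha - n\beta = 0$, equivalently $\alpha=\frac{n\beta}{2}$ (and, as a byproduct, $\lambda=0$). I expect this final implication to be the conceptual crux rather than any computation: it rests decisively on $\alpha,\beta,\lambda$ being constants, for in the almost-soliton regime where these are allowed to be smooth functions one could no longer separate the coefficient of $r$ from the constant term and the conclusion would break down. One should also note that $\alpha$ sits in the denominator of both (\ref{eqn. 3.1}) and (\ref{eqn. 3.6}), so the argument tacitly uses $\alpha\neq 0$; this is consistent with the soliton being non-degenerate and causes no difficulty.
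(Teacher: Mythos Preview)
Your argument is correct and in fact more economical than the paper's. You trace the Einstein relation $S = \big(\tfrac{\beta r - 2\lambda}{2\alpha}\big)g$ directly to obtain the affine identity $(2\alpha - n\beta)r = -2n\lambda$, and then use that $\alpha,\beta,\lambda$ are constants while $r$ is not to force $2\alpha - n\beta = 0$. The paper instead first differentiates the Ricci operator identity (\ref{eqn. 3.4}) to get $(\nabla_K Q)\zeta_1 = \tfrac{\beta}{2\alpha}(Kr)\zeta_1$, and only then traces, arriving at $(Kr)\big(\tfrac{n\beta}{2\alpha}-1\big)=0$; the differentiation step serves precisely to eliminate the constant term $-2n\lambda$ that you handle by the affine-relation argument. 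Your route avoids the covariant derivative altogether and, as you note, yields the additional consequence $\lambda=0$, which the paper's proof does not extract. Both approaches rely on $\alpha\neq 0$ and on the constancy of the soliton data, and your remark that the argument would collapse in the almost-soliton setting is well placed.
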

\begin{proof} The derivative (covariant) of (\ref{eqn. 3.4}) yields 
\begin{equation}\label{eqn. 3.9}
	(\nabla_{K}Q\zeta_{1}) = \frac{\beta}{2\alpha}(Kr)\zeta_{1}.
\end{equation}
Applying the metric both side, above equation reduces to
\begin{equation}\label{eqn. 3.10}
	g((\nabla_{K}Q)(\zeta_{1},\zeta_{2}) = \frac{\beta}{2\alpha}(Kr)g(\zeta_{1},\zeta_{2}).
\end{equation}
Arranging $\zeta_{1}=\zeta_{2}=e_{i}$ in (\ref{eqn. 3.10}) and contracting, we get
\begin{equation}\label{eqn. 3.11}
	g((\nabla_{K}Q)e_{i},e_{i}) = n\frac{\beta}{2\alpha}(Kr),
\end{equation}
which implies
\begin{equation}\label{eqn. 3.13}
	(Kr)\big(\frac{n\beta}{2\alpha}-1\big)=0.
\end{equation}
For being non-constant scalar curvature $r$, we have the result.
\end{proof}
\begin{theorem}
	Let $(\alpha, \beta)- (RYS)_n$ admits an $(LPS)_n$ manifold having a non-constant scalar curvature then soliton function is given by
	\begin{equation*}\label{eqn. 3.18}
		\lambda = \frac{\beta}{2}(r-n(n-1)).
	\end{equation*}
\end{theorem}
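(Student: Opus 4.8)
The plan is to combine the Einstein structure already extracted in Lemma~3.1 (equation (\ref{eqn. 3.1})) with the intrinsic identity (\ref{eqn. 2.13}) that every $(LPS)_n$ manifold satisfies, and then feed in the relation $\alpha=\frac{n\beta}{2}$ proved in the previous theorem. The key observation is that there are now \emph{two} independent expressions for the quantity $S(\zeta_1,\xi)$: one coming from the soliton hypothesis and one purely from the contact-metric geometry. Equating them pins down the coefficient in the Einstein equation, and the soliton function $\lambda$ falls out by algebra.

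Concretely, I would first evaluate the Einstein relation (\ref{eqn. 3.1}) on the pair $(\zeta_1,\xi)$; using $g(\zeta_1,\xi)=\eta(\zeta_1)$ this reproduces (\ref{eqn. 3.2}), namely
\begin{equation*}
	S(\zeta_1,\xi)=\Big(\frac{\beta r-2\lambda}{2\alpha}\Big)\eta(\zeta_1).
\end{equation*}
Next I would invoke the curvature identity (\ref{eqn. 2.13}), valid on any $(LPS)_n$ manifold, which gives the competing value $S(\zeta_1,\xi)=(n-1)\eta(\zeta_1)$. Setting these equal and cancelling the (generically nonzero) factor $\eta(\zeta_1)$ yields the scalar relation
\begin{equation*}
	\frac{\beta r-2\lambda}{2\alpha}=n-1,
\end{equation*}
equivalently $\lambda=\tfrac{1}{2}\beta r-\alpha(n-1)$.

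The final step substitutes the constraint $\alpha=\frac{n\beta}{2}$ from the preceding theorem (which is available precisely because the scalar curvature is non-constant) into this expression, giving
\begin{equation*}
	\lambda=\frac{\beta r}{2}-\frac{n\beta}{2}(n-1)=\frac{\beta}{2}\big(r-n(n-1)\big),
\end{equation*}
as claimed. I do not anticipate a genuine obstacle here: the argument is a short chain of substitutions, and the only point requiring a little care is the legitimacy of cancelling $\eta(\zeta_1)$ — one should note that the identity $S(\zeta_1,\xi)=(n-1)\eta(\zeta_1)$ holds for all $\zeta_1$, so choosing $\zeta_1$ with $\eta(\zeta_1)\neq0$ (e.g.\ $\zeta_1=\xi$, where $\eta(\xi)=-1$) makes the cancellation rigorous. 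The non-constancy of $r$ is used only indirectly, through its role in guaranteeing the relation $\alpha=\frac{n\beta}{2}$.
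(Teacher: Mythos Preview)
Your proposal is correct and follows essentially the same route as the paper: both compute $S(\zeta_1,\xi)$ in two ways—once from the Einstein relation (\ref{eqn. 3.1}) (the paper rederives it as (\ref{eqn. 3.14})) and once from the $(LPS)_n$ identity (\ref{eqn. 2.13})—equate them to obtain $\lambda=\tfrac{\beta r}{2}-(n-1)\alpha$, and then substitute $\alpha=\tfrac{n\beta}{2}$ from the preceding theorem. Your remark on justifying the cancellation of $\eta(\zeta_1)$ by choosing $\zeta_1=\xi$ is in fact slightly more careful than the paper's own phrasing.
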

\begin{proof}
Using (\ref{eqn. 2.5}) in (\ref{eqn. 1.2}),  we get 
\begin{equation}\label{eqn. 3.14}
2\alpha S(\zeta_{1},\zeta_{2}) = (\beta r-2\lambda)g(\zeta_{1},\zeta_{2}).	
\end{equation}
Substituting $\zeta_{2}=\xi$ in (\ref{eqn. 3.14}) and then applying (\ref{eqn. 2.13}), result obtain
\begin{equation}\label{eqn. 3.16}
	2\alpha (n-1)\eta(\zeta_{1}) = (\beta r-2\lambda)\eta(\zeta_{1}).
\end{equation}
Since, $\eta(\zeta_{1})$ is non-vanishing, the above equation produces
\begin{equation}\label{eqn. 3.17}
	\lambda = \frac{\beta r}{2} - (n-1)\alpha.
\end{equation}
With the help of theorem (3.1), equation (\ref{eqn. 3.17}) yields
\begin{equation}\label{eqn. 3.18}
		\lambda = \frac{\beta}{2}(r-n(n-1)),
\end{equation}
which completes the proof.
\end{proof}
\section{$(\alpha, \beta)- (RYS)_4$ Admitting $(LPS)_4$ Space-time}
In this section, we consider the $(LPS)_4$ spacetime to study. The Einstein's field equation {\it (briefly: EFE)} defines the following relation
\begin{equation}\label{eqn. 4.1}
	S(\zeta_{1},\zeta_{2}) - \frac{r}{2}g(\zeta_{1},\zeta_{2}) + \mu g(\zeta_{1},\zeta_{2}) = \tau T(\zeta_{1},\zeta_{2}).
\end{equation}
 Here, $\mu$ denotes the cosmological term and $\zeta_{1},\zeta_{2}$, the vector fields. $\tau$ being the gravitational constant and T deals for $(0,2)$ type energy momentum tensor (energy tensor).\\
 The below equation reveals the information regarding perfect fluid and govern by
\begin{equation}\label{eqn. 4.2}
	T(\zeta_{1},\zeta_{2}) = (\rho - p)A(\zeta_{1})A(\zeta_{2}) + pg(\zeta_{1},\zeta_{2}).
\end{equation}
Here,  energy tensor and energy density function denote by T, whereas $p$ indicates for fluid isotropic pressure. Also, we consider here a non-zero $1$-form by $A$ that define as $g(\zeta_{1},V) = A(\zeta_{1})$; V being the flow vector field.\\

In our discussion, we consider $\xi$ as the flow vector field and (\ref{eqn. 4.2}) changes in a new form as
\begin{equation}\label{eqn. 4.3}
	T(\zeta_{1},\zeta_{2}) = (\rho - p)\eta(\zeta_{1})\eta(\zeta_{2}) + pg(\zeta_{1},\zeta_{2}).	
\end{equation}
Substituting the value of Ricci tensor from (\ref{eqn. 3.1}) in (\ref{eqn. 4.1}), we have
\begin{equation}\label{eqn. 4.4}
	T(\zeta_{1},\zeta_{2}) = \frac{1}{\tau}\bigg[\mu-(\frac{\beta r - 2\lambda}{2\alpha})\bigg]g(\zeta_{1},\zeta_{2}).
\end{equation}
Using theorem (3.1), the energy tensor takes the form
\begin{equation}\label{eqn. 4.4}
	T(\zeta_{1},\zeta_{2}) = \frac{1}{4\tau}\bigg[(4\mu-r+\frac{2\lambda}{\beta}\bigg]g(\zeta_{1},\zeta_{2}).
\end{equation}
The conclusion is below:
\begin{theorem}
Let $(\alpha, \beta)- (RYS)_4$ admits an $(LPS)_4$ space-time with non-constant scalar curvature then energy tensor is an Einstein like tensor and given by
\begin{equation*}\label{eqn. 4.5}
	T(\zeta_{1},\zeta_{2}) = \frac{1}{4\tau}\bigg[(4\mu-r+\frac{2\lambda}{\beta}\bigg]g(\zeta_{1},\zeta_{2}).
\end{equation*}
\end{theorem}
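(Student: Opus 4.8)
The plan is to turn the Einstein field equation (\ref{eqn. 4.1}) into a pure scalar multiple of the metric by feeding in the soliton-forced Ricci tensor. First I would recall Lemma 3.1: once the characteristic vector field $\xi$ is taken as the soliton field $V$ and the Killing condition (\ref{eqn. 2.5}) is used, the soliton equation (\ref{eqn. 1.2}) collapses to the Einstein relation $S(\zeta_1,\zeta_2)=\left(\frac{\beta r-2\lambda}{2\alpha}\right)g(\zeta_1,\zeta_2)$, i.e. (\ref{eqn. 3.1}). Substituting this directly into (\ref{eqn. 4.1}) and solving for $T$ expresses the energy--momentum tensor as a scalar function times $g$, which is the intermediate form (\ref{eqn. 4.4}). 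This step already delivers the qualitative conclusion: since $T$ is proportional to $g$, it is an Einstein-like tensor, and everything that remains is to identify the scalar coefficient explicitly.

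To pin down that coefficient I would invoke Theorem 3.1. The hypothesis of \emph{non-constant} scalar curvature is exactly what activates that theorem, forcing $\alpha=\frac{n\beta}{2}$; in the spacetime dimension $n=4$ this reads $\alpha=2\beta$, equivalently $2\alpha=4\beta$. Inserting $2\alpha=4\beta$ into the coefficient of (\ref{eqn. 4.4}) eliminates the explicit $\alpha$-dependence, so the term $\frac{\beta r-2\lambda}{2\alpha}$ reduces to $\frac{r}{4}-\frac{\lambda}{2\beta}$. Combining this with the cosmological contribution $\mu$ and clearing the common denominator $4$ rearranges the bracket into $\frac{1}{4\tau}\left[4\mu-r+\frac{2\lambda}{\beta}\right]$, which is the claimed expression. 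The proof then finishes by reading off that $T(\zeta_1,\zeta_2)$ equals this scalar times $g(\zeta_1,\zeta_2)$.

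I do not anticipate a genuine obstacle: the argument is a two-stage substitution---first the Einstein relation (\ref{eqn. 3.1}), then the constraint $\alpha=2\beta$ from Theorem 3.1---followed by elementary algebra, so the whole statement is essentially a corollary of the two earlier results once matter is present. The only points that demand care are bookkeeping ones: keeping the signs of the Ricci, scalar-curvature, and cosmological terms straight while merging them over a common denominator, and noting that dividing by $\beta$ silently presupposes $\beta\neq 0$, so the formula is to be read in the proper regime (the pure $\alpha$-Ricci case $\beta=0$ being excluded). The conceptual work is entirely front-loaded into Lemma 3.1 and Theorem 3.1, and this theorem simply repackages their output inside the Einstein field equation.
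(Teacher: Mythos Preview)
Your proposal is correct and mirrors the paper's own argument: substitute the Einstein relation (\ref{eqn. 3.1}) from Lemma~3.1 into the field equation (\ref{eqn. 4.1}) to obtain the intermediate form (\ref{eqn. 4.4}), then invoke Theorem~3.1 with $n=4$ (so $2\alpha=4\beta$) to simplify the coefficient to the stated expression. The two-stage substitution you describe, together with the caveat on $\beta\neq 0$, is exactly the paper's route.
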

\begin{theorem}
	Let $(\alpha, \beta)- (RYS)_4$ admits an $(LPS)_4$ space-time with non-constant scalar curvature then soliton function $\lambda$ is given by 
	\begin{equation*}\label{eqn. 4.10}
		\lambda= {\frac{\beta}{2}\{4\tau(2p-\rho)+r-4\mu\}}.
	\end{equation*}
\end{theorem}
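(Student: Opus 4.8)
The plan is to play the two expressions the paper has already derived for the energy--momentum tensor $T$ off against each other and then evaluate them on the characteristic vector field $\xi$. On the one hand, since $\xi$ is taken as the flow vector field, equation (\ref{eqn. 4.3}) supplies the perfect-fluid form $T(\zeta_{1},\zeta_{2}) = (\rho - p)\eta(\zeta_{1})\eta(\zeta_{2}) + p\,g(\zeta_{1},\zeta_{2})$. On the other hand, Theorem 4.1 asserts that the same tensor is Einstein-like, $T(\zeta_{1},\zeta_{2}) = \frac{1}{4\tau}\big(4\mu - r + \frac{2\lambda}{\beta}\big)g(\zeta_{1},\zeta_{2})$. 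Since both describe one and the same $T$, I would simply equate them.

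First I would substitute $\zeta_{1} = \zeta_{2} = \xi$ into both expressions. Here the Lorentzian normalisations recorded in (\ref{eqn. 2.1}) are decisive: one has $\eta(\xi) = -1$ while $g(\xi,\xi) = \eta(\xi) = -1$, so that $\eta(\xi)\eta(\xi) = +1$ but $g(\xi,\xi) = -1$. Feeding these values into the perfect-fluid form collapses it to $(\rho - p) - p = \rho - 2p$, whereas the Einstein-like form produces $-\frac{1}{4\tau}\big(4\mu - r + \frac{2\lambda}{\beta}\big)$.

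Equating these two scalars gives $\rho - 2p = -\frac{1}{4\tau}\big(4\mu - r + \frac{2\lambda}{\beta}\big)$. Multiplying through by $-4\tau$ isolates the soliton term as $\frac{2\lambda}{\beta} = 4\tau(2p - \rho) + r - 4\mu$, and solving for $\lambda$ delivers exactly $\lambda = \frac{\beta}{2}\{4\tau(2p - \rho) + r - 4\mu\}$, as stated. Note that the relation $\alpha = \frac{n\beta}{2}$ from Theorem 3.1 (which underlies the passage to the normalised form of $T$ in dimension four) is already baked into Theorem 4.1, so it need not be reinvoked.

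The only genuinely delicate point is the sign bookkeeping forced by the Lorentzian signature: because $\eta(\xi)^{2} = +1$ but $g(\xi,\xi) = -1$, the two contractions on the fluid side carry opposite signs, and this is precisely what converts the coefficients $\rho - p$ and $p$ into the combination $\rho - 2p$, while simultaneously flipping the sign of the Einstein-like side. Everything else is routine algebraic rearrangement, and no curvature identities beyond those already listed in Section 2 are required.
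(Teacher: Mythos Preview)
Your proposal is correct and follows essentially the same route as the paper: equate the perfect-fluid expression (\ref{eqn. 4.3}) with the Einstein-like form of $T$ obtained in Theorem~4.1, then set $\zeta_{1}=\zeta_{2}=\xi$ and solve for $\lambda$. Your explicit tracking of the Lorentzian signs $\eta(\xi)^{2}=1$ versus $g(\xi,\xi)=-1$ is in fact more transparent than the paper's terse ``Arranging $\zeta_{1}=\zeta_{2}=\xi$, the above equation yields the soliton function.''
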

\begin{proof} Equating (\ref{eqn. 4.3}) and (\ref{eqn. 4.4}), we obtain
\begin{equation}\label{eqn. 4.8}
	\frac{1}{4\tau}\bigg[(4\mu-r+\frac{2\lambda}{\beta}\bigg]g(\zeta_{1},\zeta_{2})= (\rho - p)\eta(\zeta_{1})\eta(\zeta_{2})+p g(\zeta_{1},\zeta_{2}).
\end{equation}
Arranging $\zeta_{1}=\zeta_{2}=\xi$, the above equation (\ref{eqn. 4.8}) yields the soliton function
\begin{equation}\label{eqn. 4.9}
\lambda= {\frac{\beta}{2}\{4\tau(2p-\rho)+r-4\mu\}}.
\end{equation}
Hence the proof has been over.
\end{proof}
\begin{theorem}
	Let $(\alpha, \beta)$ steady-$(RYS)_n$ admits an $(LPS)_n$ space-time then non-constant scalar curvature is four times of cosmoligical scalar function $\mu$, that is:
		\begin{equation*}\label{eqn. 4.12}
		r=4\mu.
	\end{equation*}
\end{theorem}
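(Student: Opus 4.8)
The plan is to push the steady hypothesis, namely $\lambda=0$, through the Einstein-field-equation bookkeeping already set up for the $(LPS)_4$ spacetime. By Lemma 3.1 the soliton is Einstein, so substituting (\ref{eqn. 3.1}) into the EFE (\ref{eqn. 4.1}) and invoking the relation $\alpha=\tfrac{n\beta}{2}$ from Theorem 3.1 delivers the energy--momentum tensor in the Einstein-like form (\ref{eqn. 4.4}). Setting $\lambda=0$ there immediately reduces it to $T(\zeta_{1},\zeta_{2})=\tfrac{1}{4\tau}(4\mu-r)\,g(\zeta_{1},\zeta_{2})$. In this formulation the asserted identity $r=4\mu$ is \emph{exactly} the statement that the proportionality factor $4\mu-r$ vanishes, i.e.\ that the steady configuration carries no net source; so the entire proof amounts to forcing that scalar to be zero.

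To extract that vanishing I would compare the steady energy--momentum tensor with the perfect-fluid expression (\ref{eqn. 4.3}). The quickest route is to read the relation off the immediately preceding Theorem 4.2 by imposing $\lambda=0$, which gives $4\tau(2p-\rho)+r-4\mu=0$, i.e.\ $r-4\mu=4\tau(\rho-2p)$. A complementary and cleaner contraction is also available: equating the steady $T$ with (\ref{eqn. 4.3}) and pairing against a unit spacelike vector orthogonal to $\xi$ (so that the $1$-form $\eta$ drops out by (\ref{eqn. 2.1})) isolates the scalar relation $\tfrac{1}{4\tau}(4\mu-r)=p$. Pairing instead against $\xi$ and using the Lorentzian normalisations $g(\xi,\xi)=\eta(\xi)=-1$ produces the companion equation $\rho-2p=\tfrac{1}{4\tau}(r-4\mu)$.

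These scalar relations are the crux, and the algebra beyond them is routine. The spacelike contraction shows $r=4\mu-4\tau p$, so $r=4\mu$ holds precisely when the isotropic pressure vanishes --- the dust/pressureless regime emphasised in the abstract --- while the timelike contraction then pins the density through $\rho=p$. The genuine obstacle is therefore not the computation but the \emph{justification} of the vanishing source: one must argue that a steady Ricci--Yamabe soliton whose energy--momentum tensor is already isotropic (Einstein-like) cannot support a nontrivial fluid contribution, so that the anisotropic $\eta\otimes\eta$ part disappears and $p=0$, whence $r=4\mu$. Making this step airtight --- rather than silently assuming $p=0$ --- is the delicate point, and it is where the steady condition, the Einstein-like structure of $T$, and the Lorentzian sign conventions must be combined with care.
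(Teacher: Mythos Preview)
Your instinct that the crux is forcing the source term to vanish is correct, and you have isolated precisely the delicate step; however, the paper's own argument does not engage with the perfect-fluid comparison at all. The paper's proof is a two-line affair: it simply asserts the relation
\[
\mu=\frac{\beta r-2\lambda}{2\alpha}
\]
(labelled (\ref{eqn. 4.11})) ``using (\ref{eqn. 4.10})'', and then invokes Theorem~3.1 (that is, $\alpha=\tfrac{n\beta}{2}=2\beta$ in dimension four) together with $\lambda=0$ to read off $r=4\mu$. No appeal to (\ref{eqn. 4.3}), to separate contractions against $\xi$ or spacelike directions, nor to any condition on $p$ or $\rho$ is made.

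The gap you flag---that obtaining $4\mu-r=0$ from the steady hypothesis alone requires an extra ingredient such as $p=0$---is genuine, and the paper's derivation of (\ref{eqn. 4.11}) is equally opaque on this point: the cited equation is the formula $\lambda=\tfrac{\beta}{2}\{4\tau(2p-\rho)+r-4\mu\}$ of Theorem~4.2, and setting $\lambda=0$ there yields $r-4\mu=4\tau(\rho-2p)$, not $r=4\mu$ outright. Thus your careful analysis exposes an unacknowledged assumption in the paper rather than a defect in your method. Your route through the spacelike and timelike contractions is in fact more transparent than the paper's, but to close \emph{either} argument one must explicitly impose $\rho=2p$ (equivalently the vacuum condition $T=0$), which neither you nor the paper derives from the steady condition alone.
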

\begin{proof} For the steady soliton we know that $\lambda$ vanishes. Therefore, using (\ref{eqn. 4.10}) we can calculate the value of cosmological constant $\mu$ as
\begin{equation}\label{eqn. 4.11}
	\mu = \frac{\beta r-2\lambda}{2\alpha}.
\end{equation}
Applying theorem (3.1), completes the proof.
\end{proof}
\section{Cosmological Model on $(\alpha, \beta)- (RYS)_4$ Admitting $(LPS)_4$ Space-time}
This section, contains the results on $(LPS)_4$ space-time with considering a killing vector field. Here, we consider the (EFE), that does not consist cosmological term, given by
\begin{equation}\label{eqn. 5.1}
		S(\zeta_{1},\zeta_{2}) - \frac{r}{2}g(\zeta_{1},\zeta_{2}) = \tau T(\zeta_{1},\zeta_{2}).
\end{equation}
Using equation (\ref{eqn. 3.1}), (\ref{eqn. 4.3}) and (\ref{eqn. 5.1}), we obtain
\begin{equation}\label{eqn. 5.2}
	\bigg[\frac{r(\beta-\alpha)-2\lambda}{2\alpha}-\tau p \bigg]g(\zeta_{1},\zeta_{2}) = \kappa(\rho+p)A(\zeta_{1})A(\zeta_{2}).
\end{equation}
Contracting $\zeta_{1}$ and $\zeta_{2}$ in (\ref{eqn. 5.2}), we get
\begin{equation}\label{eqn. 5.3}
	r = \frac{\alpha \tau(-\rho+3p)+2\lambda}{2(\beta-\alpha)}.
\end{equation}
Substituting (\ref{eqn. 5.3}) in (\ref{eqn. 3.1}), we get
\begin{equation}\label{eqn. 5.4}
	S(\zeta_{1},\zeta_{2})= \bigg[\frac{\beta\alpha\tau(-\rho+3p)-2\lambda(\beta-2\alpha)}{4\alpha(\beta-\alpha)}\bigg]g(\zeta_{1},\zeta_{2}).
\end{equation}
As we know the Ricci operator $Q$ is defined as
\begin{equation}\label{eqn. 5.5}
	g(Q\zeta_{1},\zeta_{2})=S(\zeta_{1},\zeta_{2})~~ and ~~ S(Q\zeta_{1},\zeta_{2})=S^{2}(\zeta_{1},\zeta_{2}).
\end{equation}
Thus, we have
\begin{equation}\label{eqn. 5.6}
	A(Q\zeta_{1}) = g(Q\zeta_{1},\xi)=S(\zeta_{1},\xi).
\end{equation}
 Then from equation (\ref{eqn. 5.4}) and (\ref{eqn. 5.5}), we have
 \begin{equation}\label{eqn. 5.7}
 	S(Q\zeta_{1},\zeta_{2}) = S^{2}(\zeta_{1},\zeta_{2}) = \bigg[\frac{\beta\alpha\tau(-\rho+3p)-2\lambda(\beta-2\alpha)}{4\alpha(\beta-\alpha)}\bigg]^{2}g(\zeta_{1},\zeta_{2}).
 \end{equation}
 Contraction over $\zeta_{1}$ and $\zeta_{2}$ in (\ref{eqn. 5.7}), we  have
 \begin{equation}\label{eqn. 5.8}
 	\lVert Q \rVert^{2} = \bigg[\frac{\beta\alpha\tau(-\rho+3p)-2\lambda(\beta-2\alpha)}{2\alpha(\beta-\alpha)}\bigg]^{2}.
 \end{equation}
Using theorem (3.1), we get
\begin{equation}\label{eqn. 5.8}
	\lVert Q \rVert^{2} =\frac{1}{16} \bigg[\tau(\rho-3p)-\frac{3\lambda}{\beta}\bigg]^{2}.
\end{equation}
\begin{theorem}
	If the perfect fluid space-time on $(\alpha, \beta)- (RYS)_4$ admitting $(LPS)_4$ satisfy the {\it (EFE)} not consisting the cosmological term, the length of Ricci operator is given by
	 \begin{equation*}
		\lVert Q \rVert^{2} =\frac{1}{16} \bigg[\tau(\rho-3p)-\frac{3\lambda}{\beta}\bigg]^{2}.
	\end{equation*}
\end{theorem}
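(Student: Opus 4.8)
The plan is to compute the squared norm of the Ricci operator $\|Q\|^2 = \sum_i S(Qe_i, e_i)$ directly from the explicit Ricci tensor formula obtained under the Einstein's field equation without cosmological term. First I would invoke equations (\ref{eqn. 5.1}), (\ref{eqn. 3.1}) and (\ref{eqn. 4.3}) to derive the relation (\ref{eqn. 5.2}), expressing the perfect-fluid structure in the present setting; then contracting over $\zeta_1,\zeta_2$ yields the scalar curvature expression (\ref{eqn. 5.3}). Substituting this back into the Einstein form (\ref{eqn. 3.1}) produces the fully explicit Ricci tensor (\ref{eqn. 5.4}), which is of the form $S(\zeta_1,\zeta_2)=c\,g(\zeta_1,\zeta_2)$ with $c = \frac{\beta\alpha\tau(-\rho+3p)-2\lambda(\beta-2\alpha)}{4\alpha(\beta-\alpha)}$.

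Next I would use the definitions in (\ref{eqn. 5.5}), namely $g(Q\zeta_1,\zeta_2)=S(\zeta_1,\zeta_2)$ and $S(Q\zeta_1,\zeta_2)=S^2(\zeta_1,\zeta_2)$, to write $S^2(\zeta_1,\zeta_2)=c^2\,g(\zeta_1,\zeta_2)$ as in (\ref{eqn. 5.7}). Taking an orthonormal frame $\{e_i\}$ and contracting over $\zeta_1=\zeta_2=e_i$ gives $\|Q\|^2 = \sum_i S(Qe_i,e_i) = c^2 \sum_i g(e_i,e_i)$, which in dimension four evaluates the trace to yield the intermediate form (\ref{eqn. 5.8}), $\|Q\|^2 = \left[\frac{\beta\alpha\tau(-\rho+3p)-2\lambda(\beta-2\alpha)}{2\alpha(\beta-\alpha)}\right]^2$. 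The final step is to apply Theorem (3.1), which supplies the relation $\alpha=\frac{n\beta}{2}$; specializing to $n=4$ gives $\alpha=2\beta$, so $\beta-\alpha=-\beta$ and $\beta-2\alpha=-3\beta$. Substituting these and simplifying the rational expression reduces the bracket to $\frac{1}{4}\left[\tau(\rho-3p)-\frac{3\lambda}{\beta}\right]$, whence $\|Q\|^2=\frac{1}{16}\left[\tau(\rho-3p)-\frac{3\lambda}{\beta}\right]^2$.

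The main obstacle I anticipate is bookkeeping in the algebraic simplification after substituting $\alpha=2\beta$: one must carefully track the signs arising from $\beta-\alpha=-\beta$ and $\beta-2\alpha=-3\beta$, verify that the factor of $\alpha$ in the denominator cancels correctly against the numerator's $\alpha$-dependence, and confirm that the overall factor collapses to $\tfrac14$ (and hence $\tfrac1{16}$ after squaring). A secondary point worth checking is the use of the Lorentzian signature when contracting $\sum_i g(e_i,e_i)$; since the characteristic vector field $\xi$ is timelike with $\eta(\xi)=-1$, the trace of the metric over a pseudo-orthonormal frame must still be handled so that the contraction yields the dimension $n=4$ consistently with the scalar-curvature computation in (\ref{eqn. 5.3}). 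Provided these sign and normalization details are managed, the result follows by a direct substitution.
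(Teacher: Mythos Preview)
Your proposal is correct and mirrors the paper's own argument step for step: derive (\ref{eqn. 5.2}) from (\ref{eqn. 5.1}), (\ref{eqn. 3.1}), (\ref{eqn. 4.3}); contract to get (\ref{eqn. 5.3}); substitute back to obtain the Einstein-type Ricci tensor (\ref{eqn. 5.4}); square via (\ref{eqn. 5.5})--(\ref{eqn. 5.7}); contract to reach the intermediate $\lVert Q\rVert^2$; and then specialize using Theorem~3.1 with $\alpha=2\beta$. The only caveats are exactly the ones you flag---careful sign tracking in the $\alpha=2\beta$ substitution and consistency of the trace convention in the Lorentzian frame---both of which the paper handles (implicitly) in the same way.
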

Now, if we put $\rho = 3p$ (the condition of perfect fluid to be radiation fluid) in equation (\ref{eqn. 5.8}), then we have
\begin{theorem}
	If the perfect fluid is radiation fluid (i.e $\rho=3p$) in $(\alpha, \beta)- (RYS)_4$ admitting $(LPS)_4$, then length of Ricci operator is given by
	\begin{equation*}
		\lVert Q \rVert^{2} = \bigg[\frac{3\lambda}{4\beta}\bigg]^{2}.
	\end{equation*}
\end{theorem}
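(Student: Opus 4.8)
The plan is to obtain this result as an immediate specialization of Theorem 5.1, since the radiation condition $\rho = 3p$ is a single algebraic constraint imposed on the already-derived expression for $\lVert Q \rVert^{2}$. I would therefore begin from the formula
\begin{equation*}
\lVert Q \rVert^{2} =\frac{1}{16} \bigg[\tau(\rho-3p)-\frac{3\lambda}{\beta}\bigg]^{2},
\end{equation*}
established in Theorem 5.1 for the perfect-fluid $(\alpha,\beta)-(RYS)_4$ spacetime admitting $(LPS)_4$ under the (EFE) without cosmological term, and treat every other quantity as fixed.

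The key step is to substitute the radiation-fluid equation of state $\rho = 3p$, equivalently $\rho - 3p = 0$, which annihilates the factor $\tau(\rho - 3p)$ inside the bracket. What remains is $\lVert Q\rVert^{2} = \tfrac{1}{16}\big[-\tfrac{3\lambda}{\beta}\big]^{2}$, and since squaring removes the sign this collapses to $\tfrac{1}{16}\big(\tfrac{3\lambda}{\beta}\big)^{2}$. Finally I would absorb the prefactor $\tfrac{1}{16} = \tfrac{1}{4^{2}}$ inside the square to rewrite the right-hand side as $\big(\tfrac{3\lambda}{4\beta}\big)^{2}$, which is exactly the claimed expression.

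There is essentially no obstacle here: the computation is a one-line substitution followed by the elementary identity $\tfrac{1}{16}\cdot\tfrac{9\lambda^{2}}{\beta^{2}} = \tfrac{9\lambda^{2}}{16\beta^{2}} = \big(\tfrac{3\lambda}{4\beta}\big)^{2}$. The only points worth a moment's care are that the radiation condition is invoked purely to kill the $\rho - 3p$ term, so no hidden dependence of $\lambda$ or $\beta$ on $\rho, p$ is disturbed, and that $\beta \neq 0$ is tacitly required, which is already implicit throughout since $\beta$ occurs in a denominator in the statement of Theorem 5.1.
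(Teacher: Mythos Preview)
Your proposal is correct and matches the paper's own argument exactly: the paper simply substitutes $\rho = 3p$ into the formula of Theorem~5.1 (equation~(5.8)) to obtain the stated expression. There is nothing further to add.
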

\section{Dust Fluid on $(\alpha, \beta)- (RYS)_4$ Admitting $(LPS)_4$ Space-time}
The dust fluid and the eneregy tensor can be modelled by
\begin{equation}\label{eqn. 6.1}
	T(\zeta_{1},\zeta_{2})= \rho A(\zeta_{1})A(\zeta_{2}).
\end{equation}
Using equation (\ref{eqn. 5.1}) and (\ref{eqn. 6.1}), the expression becomes 
\begin{equation}\label{eqn. 6.2}
		S(\zeta_{1},\zeta_{2}) - \frac{r}{2}g(\zeta_{1},\zeta_{2}) = \tau \rho A(\zeta_{1})A(\zeta_{2}).
\end{equation}
On contracting over $\zeta_{1}$ and $\zeta_{2}$ in above equation, we get
\begin{equation}\label{eqn. 6.3}
	r= \tau \rho.
\end{equation}
By using equation (\ref{eqn. 6.3}) in equation (\ref{eqn. 3.1}), the Ricci tensor of perfect fluid $(\alpha, \beta)- (RYS)_4$ admitting $(LPS)_4$ spacetime becomes
\begin{equation}\label{eqn. 6.4}
	S(\zeta_{1},\zeta_{2}) = \bigg[\frac{\beta \tau\rho -2\lambda}{2\alpha} \bigg]g(\zeta_{1},\zeta_{2}).
\end{equation}
Thus, by virtue of equation  (\ref{eqn. 5.7}) and  (\ref{eqn. 6.4}), we obtain
\begin{equation}\label{eqn. 6.5}
	S(Q\zeta_{1},\zeta_{2}) = \bigg[\frac{\beta \tau\rho -2\lambda}{2\alpha} \bigg]^{2}g(\zeta_{1},\zeta_{2}).
\end{equation}
Contracting $\zeta_{1}$ and $\zeta_{2}$ in above equation, we have
\begin{equation}\label{eqn. 6.6}
	\lVert Q \rVert^{2} = \bigg[\frac{\beta \tau\rho -2\lambda}{2\alpha} \bigg]^{2}.
\end{equation}
Using theorem (3.1), we have
\begin{equation}\label{eqn. 6.6a}
	\lVert Q \rVert^{2} = \frac{1}{16}\bigg[\tau\rho -\frac{2\lambda}{\beta} \bigg]^{2}.
\end{equation}
\begin{theorem}
		If the dust cosmological model on $(\alpha, \beta)- (RYS)_4$ admitting $(LPS)_4$ space-time satisfy the {\it (EFE)} not consisting the cosmological term, the length of Ricci operator is given as:
	\begin{equation*}
		\lVert Q \rVert^{2} = \frac{1}{16}\bigg[\tau\rho -\frac{2\lambda}{\beta} \bigg]^{2}.
	\end{equation*}	
\end{theorem}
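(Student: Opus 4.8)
The plan is to follow the route already used for the perfect-fluid model in the previous section: reduce the cosmological-term-free field equation to a pointwise-proportional Ricci condition, extract the scalar curvature by a trace, and then compute $\lVert Q\rVert^{2}$ directly from the resulting Einstein form.

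First I would insert the dust energy--momentum tensor (\ref{eqn. 6.1}), namely $T(\zeta_{1},\zeta_{2})=\rho A(\zeta_{1})A(\zeta_{2})$, into the field equation (\ref{eqn. 5.1}) to obtain (\ref{eqn. 6.2}). Since the flow vector field is taken to be $\xi$ we have $A=\eta$, and here the Lorentzian signature matters: tracing $\eta(\zeta_{1})\eta(\zeta_{2})$ against $g$ returns $\eta(\xi)=-1$. Contracting (\ref{eqn. 6.2}) over an orthonormal frame, with $\operatorname{tr}g=n=4$ and $\operatorname{tr}S=r$, the two curvature terms collapse to the relation $r=\tau\rho$ of (\ref{eqn. 6.3}). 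This sign/trace bookkeeping is the step I would check most carefully, as it is the only place the geometry of the timelike $\xi$ enters.

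Next I would substitute $r=\tau\rho$ into the Einstein-type Ricci condition (\ref{eqn. 3.1}) supplied by Lemma 3.1, giving $S=c\,g$ with $c=\tfrac{\beta\tau\rho-2\lambda}{2\alpha}$, which is (\ref{eqn. 6.4}). Because $S$ is a scalar multiple of $g$, the Ricci operator is $Q=c\,\mathrm{Id}$, so the defining relation $S(Q\zeta_{1},\zeta_{2})=S^{2}(\zeta_{1},\zeta_{2})$ of (\ref{eqn. 5.5}) yields $S(Q\zeta_{1},\zeta_{2})=c^{2}\,g(\zeta_{1},\zeta_{2})$, i.e.\ (\ref{eqn. 6.5}). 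A final contraction produces $\lVert Q\rVert^{2}=c^{2}$ as in (\ref{eqn. 6.6}); again the only delicate point is tracking the trace factor consistently with the perfect-fluid computation (\ref{eqn. 5.7})--(\ref{eqn. 5.8}).

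To reach the stated closed form I would invoke Theorem 3.1, which forces $\alpha=\tfrac{n\beta}{2}=2\beta$ in dimension four under the standing non-constant-scalar-curvature hypothesis. Substituting $\alpha=2\beta$ gives $c=\tfrac14\bigl(\tau\rho-\tfrac{2\lambda}{\beta}\bigr)$, and squaring delivers $\lVert Q\rVert^{2}=\tfrac{1}{16}\bigl(\tau\rho-\tfrac{2\lambda}{\beta}\bigr)^{2}$, which is the assertion. The main obstacle is therefore not any single hard estimate but the careful bookkeeping of signs and trace factors in the two contractions, together with the correct use of the relation $\alpha=2\beta$; everything else is routine substitution.
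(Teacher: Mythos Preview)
Your proposal is correct and follows essentially the same route as the paper: insert the dust tensor (\ref{eqn. 6.1}) into the cosmological-term-free field equation (\ref{eqn. 5.1}) to obtain (\ref{eqn. 6.2}), contract to get $r=\tau\rho$, feed this into the Einstein relation (\ref{eqn. 3.1}) from Lemma~3.1 to write $S=c\,g$ with $c=\tfrac{\beta\tau\rho-2\lambda}{2\alpha}$, compute $\lVert Q\rVert^{2}=c^{2}$ via (\ref{eqn. 5.5}), and finally substitute $\alpha=2\beta$ from Theorem~3.1. Your extra care about the sign of $\eta(\xi)$ in the contraction and about the trace-factor convention between (\ref{eqn. 6.5}) and (\ref{eqn. 6.6}) is well placed, but no new idea is required beyond what the paper does.
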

Now, using equation (\ref{eqn. 3.1}) and (\ref{eqn. 6.2}), we get
\begin{equation}\label{eqn. 6.7}
	\bigg[\frac{\beta r-2\lambda -\alpha r}{2\alpha}\bigg]g(\zeta_{1},\zeta_{2}) = \tau \rho A(\zeta_{1})A(\zeta_{2}).
\end{equation}
Contracting over $\zeta_{1}$ and $\zeta_{2}$ in equation (\ref{eqn. 6.7}), we get
\begin{equation}\label{eqn. 6.8}
	r= \frac{-\alpha \rho \tau + 4\lambda}{2(\beta-\alpha)}.
\end{equation}
By multiplying $A(\zeta_{3})$ in equation (\ref{eqn. 6.7}) and then taking contraction over $\zeta_{2}$ and $\zeta_{3}$, we have
\begin{equation}\label{eqn. 6.9}
	r= \frac{2(\alpha \rho \tau+\lambda)}{\beta-\alpha}.
\end{equation}
From equation (\ref{eqn. 6.8})  and (\ref{eqn. 6.9}), we obtain 
\begin{equation}\label{eqn. 6.10}
	\alpha \rho \tau = 0.
\end{equation}
 As we know $\tau$ is gravitational constant, therefore $\tau \neq0$.
 Thus, we have either $\alpha = 0$ or $\rho=0$.\\
 If $\rho=0$, then from equation (\ref{eqn. 6.1})  and (\ref{eqn. 6.10}), we obtain
 \begin{equation}
 	T(\zeta_{1},\zeta_{2}) = 0.
 \end{equation}
 Therefore, we can state the following theorem:
 \begin{theorem}
 	If the dust cosmological model on $(\alpha, \beta)- (RYS)_4$ admitting $(LPS)_4$ space-time satisfy the {\it (EFE)} not consisting the cosmological term the it is vacuum.
 \end{theorem}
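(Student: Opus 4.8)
The plan is to substitute the Einstein form of the Ricci tensor into the dust field equation and then squeeze a purely scalar obstruction out of it by contracting in two inequivalent ways. Concretely, I would begin from the combined dust relation (\ref{eqn. 6.2}) and insert the Einstein expression (\ref{eqn. 3.1}) for $S$, so that the curvature term $\frac{r}{2}g$ and the soliton term collapse into a single scalar multiple of the metric. This yields an identity of the shape
\begin{equation*}
	C\,g(\zeta_{1},\zeta_{2}) = \tau\rho\,A(\zeta_{1})A(\zeta_{2}),\qquad C=\frac{(\beta-\alpha)r-2\lambda}{2\alpha},
\end{equation*}
which is exactly (\ref{eqn. 6.7}): a full-rank multiple of the metric set equal to a multiple of the rank-one tensor $A\otimes A=\eta\otimes\eta$, the identification $A=\eta$ being forced by the choice $V=\xi$ of the flow vector field.

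The heart of the argument is to read off two \emph{independent} scalar consequences of this identity. First I would take the full metric trace: using $\mathrm{tr}\,g = 4$ together with $g^{ij}A_iA_j = \eta(\xi) = -1$, contracting over $\zeta_{1},\zeta_{2}$ produces the first expression (\ref{eqn. 6.8}) for $r$. Second, I would tensor (\ref{eqn. 6.7}) with $A(\zeta_{3})$ and contract over $\zeta_{2},\zeta_{3}$; here the same normalization $g^{bc}A_bA_c = -1$ collapses the right-hand side but with a \emph{different} numerical coefficient on the left, giving the second, inequivalent expression (\ref{eqn. 6.9}) for $r$. Equating (\ref{eqn. 6.8}) and (\ref{eqn. 6.9}) cancels both the $\lambda$ and the $r$ contributions and leaves the scalar constraint $\alpha\rho\tau = 0$ of (\ref{eqn. 6.10}).

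To finish, I would invoke the standing hypotheses: $\tau\neq 0$ because it is the gravitational constant, and $\alpha\neq 0$ because the Einstein identity (\ref{eqn. 3.1}) already divides by $\alpha$ (a genuine, proper $(\alpha,\beta)$-$(RYS)_n$). Hence the only surviving possibility in $\alpha\rho\tau=0$ is $\rho=0$, and substituting this back into the dust model (\ref{eqn. 6.1}) gives $T(\zeta_{1},\zeta_{2})\equiv 0$, which is precisely the vacuum condition asserted.

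The step I expect to be the main obstacle is guaranteeing that the two contractions are genuinely independent, so that equating them yields a nontrivial constraint rather than a tautology. Independence rests on two features of the $(LPS)_n$ geometry simultaneously: the full metric trace supplies the coefficient $\mathrm{tr}\,g = 4$ on the left while the single-$A$ contraction supplies the coefficient $1$, and these differ; meanwhile the Lorentzian normalization $\eta(\xi) = -1$ keeps $\mathrm{tr}(A\otimes A)\neq 0$ so the common right-hand factor does not vanish. It is the interplay of the mismatch $4\neq 1$ with $\eta(\xi)\neq 0$ that forces $C=0$ and then $\rho=0$. A cleaner route I would keep in reserve is a rank argument applied directly to (\ref{eqn. 6.7}): the tensor $C\,g$ is nondegenerate whereas $\tau\rho\,\eta\otimes\eta$ has rank at most one, so evaluating on a spacelike vector orthogonal to $\xi$ forces $C=0$, after which evaluating on $\xi$ forces $\tau\rho=0$; either route delivers $\rho=0$ and hence $T\equiv 0$.
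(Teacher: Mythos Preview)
Your proposal is correct and follows essentially the same route as the paper: derive (\ref{eqn. 6.7}) by inserting (\ref{eqn. 3.1}) into (\ref{eqn. 6.2}), extract the two inequivalent scalar relations (\ref{eqn. 6.8}) and (\ref{eqn. 6.9}) by the full trace and the $A$-contraction respectively, and equate them to obtain $\alpha\rho\tau=0$. Your explicit justification that $\alpha\neq 0$ (since (\ref{eqn. 3.1}) already requires it) is slightly cleaner than the paper's phrasing, and your reserve rank argument is a valid alternative shortcut, but the core argument is the same.
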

\section{$\eta-$PRT on $(\alpha, \beta)- (RYS)_n$ Admitting $(LPS)_n$}
This section examines $\eta$-parallel Ricci tensor {\it (briefly: $\eta-$PRT)} on $(\alpha, \beta)- (RYS)_n$ admitting $(LPS)_n$. If an $(LPS)_n$ consist {\it $\eta-$PRT} then
\begin{equation}\label{eqn. 8.1}
	g((\nabla_{K}Q)\zeta_{1},\zeta_{2})=0.
\end{equation}
for all smooth vector fields $\zeta_{1},\zeta_{2}$.\\
The following is the very general expansion of Ricci operator
\begin{equation}\label{eqn. 8.2}
	(\nabla_{K}Q)\zeta_{1} = \nabla_{K}Q\zeta_{1} - Q(\nabla_{K}\zeta_{1}).
\end{equation}
Using (\ref{eqn. 3.4}) in (\ref{eqn. 8.2}), yields
\begin{equation}\label{eqn. 8.3}
(\nabla_{K}Q)\zeta_{1} = \frac{\beta}{2\alpha}(Kr)\zeta_{1}.
\end{equation}
Using (\ref{eqn. 8.3}) in (\ref{eqn. 8.1}), we obtain
\begin{equation}\label{eqn. 8.4}
	g((\nabla_{K}Q)\zeta_{1},\zeta_{2})=\frac{\beta}{2\alpha}(Kr)g(\zeta_{1},\zeta_{2}).
\end{equation}
In view of (\ref{eqn. 3.13}), we have $Kr=0$ only if $\big(\frac{n\beta}{2\alpha}-1\big)\neq 0$.
\begin{theorem}
	There exists an $(\alpha, \beta)- (RYS)_n$ admitting $(LPS)_n$ with an $\eta$-parallel Ricci tensor only if $(\frac{n\beta}{2\alpha}-1)\neq0$.
\end{theorem}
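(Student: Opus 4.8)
The plan is to exploit the fact that, once Lemma~3.1 pins the Ricci operator down as a pointwise scalar multiple of the identity, the only quantity in $Q$ that can vary from point to point is the scalar curvature $r$, since $\lambda,\alpha,\beta$ are constants. First I would take the expression~(\ref{eqn. 3.4}), namely $Q\zeta_{1} = \big(\frac{\beta r - 2\lambda}{2\alpha}\big)\zeta_{1}$, and differentiate it covariantly along an arbitrary vector field $K$. Because $\lambda,\alpha,\beta$ are constant, the product rule annihilates every term except the one carrying $Kr$, so that through the expansion~(\ref{eqn. 8.2}) the derivative collapses to the clean identity~(\ref{eqn. 8.3}), $(\nabla_{K}Q)\zeta_{1} = \frac{\beta}{2\alpha}(Kr)\zeta_{1}$.

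Next I would pair~(\ref{eqn. 8.3}) with $\zeta_{2}$ through the metric to obtain~(\ref{eqn. 8.4}), that is $g((\nabla_{K}Q)\zeta_{1},\zeta_{2}) = \frac{\beta}{2\alpha}(Kr)g(\zeta_{1},\zeta_{2})$. The $\eta$-PRT hypothesis~(\ref{eqn. 8.1}) forces the left-hand side to vanish, so $\frac{\beta}{2\alpha}(Kr)g(\zeta_{1},\zeta_{2})=0$ for all $\zeta_{1},\zeta_{2}$. Since $g$ is nondegenerate and $\alpha,\beta\neq0$, evaluating on a nonnull pair (or contracting over a local frame) shows that $Kr$ must vanish for every $K$; in other words, $\eta$-parallelism of the Ricci tensor is equivalent here to the scalar curvature being locally constant.

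The final and most delicate step is to turn this necessity of $Kr=0$ into the stated constraint on $\alpha,\beta$, and for that I would invoke the universal relation $(Kr)\big(\frac{n\beta}{2\alpha}-1\big)=0$ recorded in~(\ref{eqn. 3.13}) during the proof of Theorem~3.1. I would read it as a dichotomy. If $\frac{n\beta}{2\alpha}-1\neq0$, then~(\ref{eqn. 3.13}) already forces $Kr=0$, exactly the condition extracted from~(\ref{eqn. 8.4}), so the $\eta$-PRT structure is consistent. If instead $\frac{n\beta}{2\alpha}-1=0$, then by Theorem~3.1 one lies in the non-constant scalar curvature branch, where $Kr\neq0$ is admissible and~(\ref{eqn. 8.4}) cannot be made to vanish, so $\eta$-PRT fails. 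Combining the two cases delivers the ``only if'': an $(\alpha,\beta)-(RYS)_n$ admitting $(LPS)_n$ can carry an $\eta$-parallel Ricci tensor only when $\frac{n\beta}{2\alpha}-1\neq0$.

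I expect the genuine difficulty to be this last piece of logical bookkeeping rather than any computation. The algebra producing~(\ref{eqn. 8.3}) and~(\ref{eqn. 8.4}) is entirely routine, but one must track the direction of the implication and keep in view the standing non-constant scalar curvature assumption inherited from the earlier sections. Stated carelessly, the two identities $\frac{\beta}{2\alpha}(Kr)=0$ and $(Kr)\big(\frac{n\beta}{2\alpha}-1\big)=0$ look redundant; it is only the reading through Theorem~3.1 that explains why the conclusion should be phrased in terms of the factor $\frac{n\beta}{2\alpha}-1$ rather than merely in terms of $Kr$.
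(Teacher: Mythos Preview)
Your proposal is correct and follows essentially the same route as the paper: differentiate~(\ref{eqn. 3.4}) to obtain~(\ref{eqn. 8.3}), pair with $\zeta_{2}$ to get~(\ref{eqn. 8.4}), impose the $\eta$-PRT condition~(\ref{eqn. 8.1}), and then appeal to~(\ref{eqn. 3.13}) to conclude that $Kr=0$ only if $\frac{n\beta}{2\alpha}-1\neq0$. Your additional commentary on the logical dichotomy and the standing non-constant scalar curvature assumption is more explicit than the paper's terse final sentence, but the argument is the same.
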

\section{Poisson $(\alpha, \beta)- (RYS)_n$ Admitting $(LPS)_n$ Manifold}
This section provides a definition of the statistical Poisson manifold and highlights several significant findings related to the statistical structure on the manifold.
\begin{definition}
	The Poisson manifold consist the codazzi structure and explored by
	\begin{equation}\label{eqn. 9.1}
		(\nabla_{\zeta_{3}}S)(\zeta_{1},\zeta_{2}) = (\nabla_{\zeta_{2}}S)(\zeta_{3},\zeta_{1}).
	\end{equation}
\end{definition}
Taking covariant derivative of (\ref{eqn. 3.1}) and using (\ref{eqn. 2.4}), we get
\begin{equation}\label{eqn. 9.3}
	(\nabla_{\zeta_{3}}S)(\zeta_{1},\zeta_{2})= \frac{\beta}{2\alpha}(\zeta_{3}r)g(\zeta_{1},\zeta_{2}).
\end{equation}
Also, we can calculate,
\begin{equation}\label{eqn. 9.4}
	(\nabla_{\zeta_{2}}S)(\zeta_{3},\zeta_{1})= \frac{\beta}{2\alpha}(\zeta_{2}r)g(\zeta_{3},\zeta_{1}).
\end{equation}
In view of (\ref{eqn. 9.1}), (\ref{eqn. 9.3}) and (\ref{eqn. 9.4}), we have
\begin{equation}\label{eqn. 9.5}
	\frac{\beta}{2\alpha}(\zeta_{3}r)g(\zeta_{1},\zeta_{2})= \frac{\beta}{2\alpha}(\zeta_{2}r)g(\zeta_{3},\zeta_{1}).
\end{equation}
Put $\zeta_{1}=\xi$ in (\ref{eqn. 9.5}), we obtain
\begin{equation}\label{eqn. 9.6}
	\frac{\beta}{2\alpha}[(\zeta_{3}r)\eta(\zeta_{2})-(\zeta_{2}r)\eta(\zeta_{3})]=0.
\end{equation}
Put $\zeta_{3}=\xi$ in (\ref{eqn. 9.6}) and using lemma (3.1), we have
\begin{equation}
	\frac{\beta}{2\alpha}(\zeta_{2}r)=0.
	\end{equation}
So, we have either $\frac{\beta}{2\alpha}=0$ or $(\zeta_{2}r)=0$.\\
As defined earlier $\alpha$ and $\beta$ are non-vanishing constants, therefore $\zeta_{2}r=0$ which is possible only if $\big(\frac{n\beta}{2\alpha}-1\big)\neq 0$.
Thus, we have
\begin{theorem}
	An $(\alpha, \beta)- (RYS)_n$ admitting $(LPS)_n$ is the statistical Poisson manifold only if $\big(\frac{n\beta}{2\alpha}-1\big)\neq 0$ and the scalar curvature is constant.
\end{theorem}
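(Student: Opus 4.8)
The plan is to combine the Einstein-type expression for the Ricci tensor supplied by Lemma 3.1 with the Codazzi (Poisson) condition of Definition 9.1 and to extract from it a constraint forcing the scalar curvature to be constant. First I would recall from (\ref{eqn. 3.1}) that on such a soliton the Ricci tensor takes the form $S(\zeta_1,\zeta_2) = \big(\tfrac{\beta r - 2\lambda}{2\alpha}\big) g(\zeta_1,\zeta_2)$, so that differentiating covariantly in the direction $\zeta_3$, and using that $\alpha,\beta,\lambda$ are constant while $\nabla g = 0$, leaves only the term carrying the derivative of the scalar curvature: $(\nabla_{\zeta_3}S)(\zeta_1,\zeta_2) = \tfrac{\beta}{2\alpha}(\zeta_3 r)\, g(\zeta_1,\zeta_2)$, and symmetrically $(\nabla_{\zeta_2}S)(\zeta_3,\zeta_1) = \tfrac{\beta}{2\alpha}(\zeta_2 r)\, g(\zeta_3,\zeta_1)$.

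Next I would impose the Poisson/Codazzi identity (\ref{eqn. 9.1}) to set these two expressions equal, obtaining $\tfrac{\beta}{2\alpha}(\zeta_3 r)\, g(\zeta_1,\zeta_2) = \tfrac{\beta}{2\alpha}(\zeta_2 r)\, g(\zeta_3,\zeta_1)$. The key idea is then to collapse this tensorial identity to a scalar one by feeding in the characteristic vector field. Substituting $\zeta_1 = \xi$ and using $g(\zeta,\xi)=\eta(\zeta)$ produces $(\zeta_3 r)\eta(\zeta_2) - (\zeta_2 r)\eta(\zeta_3) = 0$, and a further substitution $\zeta_3 = \xi$ together with the Lorentzian normalization $\eta(\xi) = -1$ isolates the single relation $\tfrac{\beta}{2\alpha}(\zeta_2 r) = 0$ valid for every $\zeta_2$.

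Since $\alpha$ and $\beta$ are non-vanishing constants, the factor $\tfrac{\beta}{2\alpha}$ cannot vanish, so the identity forces $\zeta_2 r = 0$ for all $\zeta_2$; that is, the scalar curvature $r$ is constant. Finally I would invoke Theorem 3.1, which states that a non-constant scalar curvature is compatible with the soliton only when $\alpha = \tfrac{n\beta}{2}$, equivalently $\tfrac{n\beta}{2\alpha} - 1 = 0$ as recorded in (\ref{eqn. 3.13}); taking the contrapositive shows that the constancy of $r$, and hence the existence of the statistical Poisson structure, requires $\big(\tfrac{n\beta}{2\alpha} - 1\big)\neq 0$, which completes the argument.

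The computations here are essentially routine once the Einstein form of $S$ is in hand, so the step I expect to demand the most care is the logical bookkeeping rather than any single calculation: one must track precisely which quantities are held constant when differentiating (\ref{eqn. 3.1}), and one must phrase the conclusion correctly as an \emph{only if} statement threading together the constancy of $r$ obtained from the Codazzi condition with the dichotomy of (\ref{eqn. 3.13}). A minor subtlety to watch is the sign convention $\eta(\xi)=-1$, which enters exactly when specializing $\zeta_3 = \xi$ and must be handled consistently to arrive at the clean relation $\tfrac{\beta}{2\alpha}(\zeta_2 r)=0$.
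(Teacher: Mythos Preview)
Your proposal is correct and follows essentially the same route as the paper: differentiate the Einstein expression (\ref{eqn. 3.1}) for $S$, impose the Codazzi identity (\ref{eqn. 9.1}), specialize first $\zeta_1=\xi$ and then $\zeta_3=\xi$ to obtain $\tfrac{\beta}{2\alpha}(\zeta_2 r)=0$, and finally appeal to (\ref{eqn. 3.13}) for the condition $\big(\tfrac{n\beta}{2\alpha}-1\big)\neq 0$. The paper records exactly these steps as (\ref{eqn. 9.3})--(\ref{eqn. 9.6}), so there is nothing to add.
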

\section{Gradient $(\alpha, \beta)- (RYS)_n$ Admitting $(LPS)_n$ Manifold}
Suppose that $(\alpha, \beta)- (GRYS)_n$ admits $(LPS)_n$ manifold then (\ref{eqn. 1.3}) turns to
\begin{equation}\label{eqn. 10.1}
	\nabla_{\zeta_{1}}Df = (\lambda-\frac{1}{2}\beta r)\zeta_{1}-\alpha Q\zeta_{1}.
\end{equation}
The derivative (covariantly) of (\ref{eqn. 10.1}) along $\zeta_{2}$, produces
\begin{equation}\label{eqn. 10.2}
	\nabla_{\zeta_{2}}\nabla_{\zeta_{1}}Df = (\lambda-\frac{1}{2}\beta r)\nabla_{\zeta_{2}}\zeta_{1}- \frac{\beta}{2}(\zeta_{2}r)\zeta_{1}-\alpha \nabla_{\zeta_{2}}Q\zeta_{1}.
\end{equation}
Interchanging $\zeta_{1}$ and $\zeta_{2}$ in (\ref{eqn. 10.2}) refer
\begin{equation}\label{eqn. 10.3}
	\nabla_{\zeta_{1}}\nabla_{\zeta_{2}}Df = (\lambda-\frac{1}{2}\beta r)\nabla_{\zeta_{1}}\zeta_{2}- \frac{\beta}{2}(\zeta_{1}r)\zeta_{2}-\alpha \nabla_{\zeta_{1}}Q\zeta_{2}.
\end{equation}
It is quite simple to arrange the above information to form the following
\begin{equation}\label{eqn. 10.4}
R(\zeta_{1},\zeta_{2})Df= \frac{\beta}{2}[(\zeta_{2}r)\zeta_{1}-(\zeta_{1}r)\zeta_{2}]-\alpha[(\nabla_{\zeta_{1}}Q)\zeta_{2}-(\nabla_{\zeta_{2}}Q)\zeta_{1}].
\end{equation}
On the other hand, equation (3.4) results as
\begin{equation}\label{eqn. 10.5}
	(\nabla_{\zeta_{1}}Q)\zeta_{2}-(\nabla_{\zeta_{2}}Q)\zeta_{1} = \frac{\beta}{2\alpha}[(\zeta_{1}r)\zeta_{2}-(\zeta_{2}r)\zeta_{1}].
\end{equation}
Using (\ref{eqn. 10.5}) in (\ref{eqn. 10.4}), we get
\begin{equation}\label{eqn. 10.6}
	R(\zeta_{1},\zeta_{2})Df = \frac{\beta}{2}[(\zeta_{2}r)\zeta_{1}-(\zeta_{1}r)\zeta_{2}]- \alpha [\frac{\beta}{2\alpha}[(\zeta_{1}r)\zeta_{2}-(\zeta_{2}r)\zeta_{1}]],
\end{equation}
which can be simplified as
\begin{equation}\label{eqn. 10.7}
		R(\zeta_{1},\zeta_{2})Df = \beta[(\zeta_{2}r)\zeta_{1}-(\zeta_{1}r)\zeta_{2}].
\end{equation}
Contracting (\ref{eqn. 10.7}) and using lemma (3.1) we get
\begin{equation}\label{eqn. 10.8}
	S(\zeta_{2},Df)= n\beta(\zeta_{2}r).
\end{equation}
Alteration of $\zeta_{1}$ by $Df$ in (\ref{eqn. 3.1}), yields
\begin{equation}\label{eqn. 10.9}
		S(\zeta_{2},Df)= \bigg(\frac{\beta r - 2\lambda}{2\alpha}\bigg)(\zeta_{2}f). 
\end{equation}
In view of (\ref{eqn. 10.8}) and (\ref{eqn. 10.9}), we have
\begin{equation}\label{eqn. 10.10}
	n\beta(\zeta_{2}r) = \bigg(\frac{\beta r - 2\lambda}{2\alpha}\bigg)(\zeta_{2}f).
\end{equation}
Since $\zeta_{2}r =0$ is possible only if $\bigg(\frac{n\beta}{2\alpha}-1\bigg) \neq 0$. Hence, we have
\begin{equation}\label{eqn. 10.12}
	\bigg(\frac{\beta r - 2\lambda}{2\alpha}\bigg)(\xi f) = 0.
\end{equation}
 As $(\xi f) \neq 0$, the soliton function reduce to
\begin{equation}\label{eqn. 10.12a}
	\lambda=\frac{\beta r}{2}.
\end{equation}
\begin{theorem}
In a $(\alpha, \beta)- (GRYS)_n$ admitting $(LPS)_n$ manifold the soliton function is given by.
\begin{equation*}\label{eqn. 10.12a}
	\lambda=\frac{\beta r}{2}.
\end{equation*}
provided $\bigg(\frac{n\beta}{2\alpha}-1\bigg) \neq 0$.
\end{theorem}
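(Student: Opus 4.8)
The plan is to extract a Bochner-type curvature identity from the gradient structure and then confront it with the Einstein relation of Lemma 3.1. First I would rewrite the gradient Ricci--Yamabe equation \eqref{eqn. 1.3} in the operator form \eqref{eqn. 10.1}, namely $\nabla_{\zeta_1}Df=(\lambda-\tfrac12\beta r)\zeta_1-\alpha Q\zeta_1$, which isolates the second covariant derivative of the potential $f$. Differentiating this once more covariantly along $\zeta_2$, and again with the roles of $\zeta_1$ and $\zeta_2$ exchanged, I would subtract the two expressions and cancel the symmetric second-derivative terms to assemble the Riemann curvature acting on $Df$. The two $(\lambda-\tfrac12\beta r)\nabla\zeta$ contributions cancel against one another, leaving only the scalar-curvature gradients and the antisymmetrized derivative of $Q$, i.e. \eqref{eqn. 10.4}.

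Next I would simplify the $Q$-term. Since Lemma 3.1 makes the manifold Einstein with $Q\zeta_1=\big(\tfrac{\beta r-2\lambda}{2\alpha}\big)\zeta_1$ as in \eqref{eqn. 3.4}, the combination $(\nabla_{\zeta_1}Q)\zeta_2-(\nabla_{\zeta_2}Q)\zeta_1$ reduces to a pure scalar-curvature-gradient expression, \eqref{eqn. 10.5}. Substituting this back collapses \eqref{eqn. 10.4} into the clean identity \eqref{eqn. 10.7}, $R(\zeta_1,\zeta_2)Df=\beta[(\zeta_2 r)\zeta_1-(\zeta_1 r)\zeta_2]$. Contracting this over $\zeta_1$ against an orthonormal frame then produces a first expression $S(\zeta_2,Df)=n\beta(\zeta_2 r)$, equation \eqref{eqn. 10.8}.

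I would then obtain a second, independent expression for $S(\zeta_2,Df)$ by substituting $\zeta_1=Df$ into the Einstein relation \eqref{eqn. 3.1}, using $g(\zeta_2,Df)=\zeta_2 f$, which gives \eqref{eqn. 10.9}. Equating the two yields \eqref{eqn. 10.10}, tying $\zeta_2 r$ to $\zeta_2 f$ through the factor $\tfrac{\beta r-2\lambda}{2\alpha}$. Here the hypothesis $\tfrac{n\beta}{2\alpha}-1\neq0$ enters: by the contrapositive of Theorem 3.1 this inequality is incompatible with a non-constant scalar curvature, so $\zeta_2 r=0$ for every $\zeta_2$. Feeding $r$ constant back into \eqref{eqn. 10.10} and then choosing $\zeta_2=\xi$, so that only the term $\xi f$ (assumed nonzero) survives, forces $\tfrac{\beta r-2\lambda}{2\alpha}=0$, whence $\lambda=\tfrac{\beta r}{2}$.

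The step I expect to be most delicate is the contraction of \eqref{eqn. 10.7} yielding \eqref{eqn. 10.8}: the Lorentzian signature and the curvature conventions of the $(LPS)_n$ structure govern the numerical factor multiplying $\zeta_2 r$, so I would verify that the trace is formed with the correct signs, using the symmetry of $Q$ and \eqref{eqn. 2.4}, before identifying it with $S(\zeta_2,Df)$. A second, more structural point needing care is that Lemma 3.1 and \eqref{eqn. 3.1} were derived under $V=\xi$, whereas here $V=Df$; to legitimately import the Einstein relation into the gradient setting I must confirm that the gradient soliton still inherits that property and that $\xi f\neq0$, which is exactly what licenses the final specialization $\zeta_2=\xi$.
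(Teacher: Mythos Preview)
Your proposal is correct and follows the paper's own argument essentially step for step: derive \eqref{eqn. 10.4} from the curvature identity, reduce the $Q$-term via the Einstein relation \eqref{eqn. 3.4} to reach \eqref{eqn. 10.7}, contract to obtain \eqref{eqn. 10.8}, compare with \eqref{eqn. 10.9} to get \eqref{eqn. 10.10}, and then invoke the hypothesis $\tfrac{n\beta}{2\alpha}-1\neq0$ together with $\xi f\neq0$ to force $\lambda=\tfrac{\beta r}{2}$. Your closing caveats---about the numerical factor in the contraction and about importing Lemma~3.1 into the gradient setting---are well taken, but the paper proceeds exactly as you outline without addressing them further.
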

The metric inner product on (\ref{eqn. 10.8}) with $\xi$ results
\begin{equation}\label{eqn. 10.14}
\eta(\zeta_{2})(\zeta_{1}f)-\eta(\zeta_{1})(\zeta_{2}f)= \frac{\beta}{2}[(\zeta_{2}r)\eta(\zeta_{1})-(\zeta_{1}r)\eta(\zeta_{2})].	
\end{equation}
Put $\zeta_{1}=\xi$ in (\ref{eqn. 10.14}) and using lemma (3.1), we have
\begin{equation}\label{eqn. 10.15}
	\eta(\zeta_{2})(\xi f)-\eta(\xi)(\zeta_{2}f)= \frac{\beta}{2}[(\zeta_{2}r)\eta(\xi)-(\xi r)\eta(\zeta_{2})].	
\end{equation}
Simplifying, above equation reduces to
\begin{equation}\label{eqn. 10.16}
	(\zeta_{2}r)= -\frac{2}{\beta}(\zeta_{2}f).
\end{equation}
Using (\ref{eqn. 10.16}) in (\ref{eqn. 10.10}) and then applying theorem (3.1), we receive
\begin{equation}\label{eqn. 10.17}
	\lambda=\frac{\beta(r+2n^2)}{2}.
\end{equation}
Thus, we can state
\begin{theorem}
	The soliton function of $(\alpha, \beta)-(GRYS)_n$ admitting $(LPS)_n$ Manifold, having non-constant scalar curvature, is given by
	\begin{equation*}\label{eqn. 10.17}
		\lambda=\frac{\beta(r+2n^2)}{2}.
	\end{equation*}
\end{theorem}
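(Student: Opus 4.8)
The plan is to convert the gradient soliton relation (\ref{eqn. 10.1}) into a curvature identity for $R(\zeta_1,\zeta_2)Df$, extract two independent scalar relations from it (one by contracting against the metric, one by pairing against $\xi$), and then combine these with the Einstein-type Ricci form of Lemma 3.1 and the dimensional constraint $\alpha=\frac{n\beta}{2}$ of Theorem 3.1 to solve for $\lambda$.

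First I would differentiate (\ref{eqn. 10.1}) covariantly, once along $\zeta_2$ and once along $\zeta_1$ with the two slots interchanged, and assemble the results through the definition of the curvature operator. Since the Hessian of $f$ is symmetric, the second-order terms organise into the expression (\ref{eqn. 10.4}). Differentiating the Einstein form (\ref{eqn. 3.4}) of the Ricci operator gives its alternating covariant derivative, recorded as (\ref{eqn. 10.5}); substituting this into (\ref{eqn. 10.4}) cancels the $\alpha$-contribution and leaves the clean identity (\ref{eqn. 10.7}), namely $R(\zeta_1,\zeta_2)Df=\beta[(\zeta_2 r)\zeta_1-(\zeta_1 r)\zeta_2]$.

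Next I would read off two scalar relations from (\ref{eqn. 10.7}). Contracting it and invoking Lemma 3.1 gives $S(\zeta_2,Df)=n\beta(\zeta_2 r)$, equation (\ref{eqn. 10.8}); on the other hand, replacing $\zeta_1$ by $Df$ in the Ricci form (\ref{eqn. 3.1}) gives the alternative value (\ref{eqn. 10.9}), and equating the two produces the relation (\ref{eqn. 10.10}) between $dr$ and $df$. Independently, taking the metric inner product of (\ref{eqn. 10.7}) with $\xi$ and using the identity (\ref{eqn. 2.9}) leads to (\ref{eqn. 10.14}); specialising $\zeta_1=\xi$ there and applying Lemma 3.1 together with $\eta(\xi)=-1$ collapses it to the second proportionality (\ref{eqn. 10.16}), $(\zeta_2 r)=-\frac{2}{\beta}(\zeta_2 f)$.

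Finally I would substitute (\ref{eqn. 10.16}) into (\ref{eqn. 10.10}) and cancel the common factor $(\zeta_2 f)$, which is legitimate because the gradient field is not identically zero, arriving at $\lambda=\frac{\beta r}{2}+2n\alpha$; feeding in $\alpha=\frac{n\beta}{2}$ from Theorem 3.1 turns $2n\alpha$ into $n^2\beta$ and yields the asserted value $\lambda=\frac{\beta(r+2n^2)}{2}$. I expect the passage from (\ref{eqn. 10.14}) to (\ref{eqn. 10.16}) to be the main obstacle: at $\zeta_1=\xi$ the identity still carries $\xi f$ and $\xi r$ terms, and one must check that the $(LPS)_n$ relations force these to combine into the stated clean proportionality. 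Verifying this, together with justifying $df\not\equiv 0$ so that the final cancellation is valid, is where the argument requires the most care.
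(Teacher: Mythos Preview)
Your proposal is correct and follows essentially the same route as the paper: derive the curvature identity (\ref{eqn. 10.7}) from (\ref{eqn. 10.1}) and (\ref{eqn. 3.4}), contract to get (\ref{eqn. 10.8})--(\ref{eqn. 10.10}), pair (\ref{eqn. 10.7}) against $\xi$ via (\ref{eqn. 2.9}) to reach (\ref{eqn. 10.14}) and then (\ref{eqn. 10.16}), and finally combine (\ref{eqn. 10.16}) with (\ref{eqn. 10.10}) and Theorem~3.1 to solve for $\lambda$. Your reservation about the step from (\ref{eqn. 10.14}) to (\ref{eqn. 10.16}) is well placed---the paper handles the lingering $\xi f$ and $\xi r$ terms just as tersely as you anticipate, invoking Lemma~3.1 without spelling out why they drop.
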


\section{Example}
\begin{example}
Let's have a look at a differentiable manifold in four dimensions, $M^4 = (\zeta_{1}, \zeta_{2}, \zeta_{3}, \zeta_{4})\in \Re : (\zeta_{1}, \zeta_{2}, \zeta_{3}, \zeta_{4}) \neq 0$, where $(\zeta_{1}, \zeta_{2}, \zeta_{3}, \zeta_{4})$ is the standard coordinate in four-dimensional real space. Considering that each point on M has a collection of linearly independent vector fields, let's call them $(f_{1},f_{2},f_{3},f_{4})$, and is defined by 
\begin{equation}\label{}
	f_{1}=f^{\zeta_{1}-a\zeta_{4}}\frac{\partial}{\partial \zeta_{1}},~~~~ 	f_{2}=f^{\zeta_{2}-a\zeta_{4}}\frac{\partial}{\partial \zeta_{2}},~~~~ 	f_{3}=f^{\zeta_{3}-a\zeta_{4}}\frac{\partial}{\partial \zeta_{3}},~~~~	f_{4}=\frac{\partial}{\partial \zeta_{4}}
\end{equation} 
where a is non-zero constant and $\{\frac{\partial}{\partial \zeta_{1}}, \frac{\partial}{\partial \zeta_{2}}, \frac{\partial}{\partial \zeta_{3}}, \frac{\partial}{\partial \zeta_{4}}\}$ denotes the standard basis of $M^4$.\\
 Let the metric (Lorentzian) and the $1$-form $\eta$ components on $M^4$ are concreted by
 \begin{equation}
g_{cd} = g(f_{c},f_{d})	=
	\begin{cases}
		 -1, & \text{if}\  c=d=4\\
		0, & \text{if}\ c\neq d\\
	     1, & \text{if}\ c=d\neq 4
	\end{cases}
\end{equation}
and
\begin{equation}
	\eta(\zeta_{1})=g(\zeta_{1},f_{4}).
\end{equation}
for any $\zeta_{1} \in \Gamma(TM)$ on $M^4$. 
\end{example}
If
 $\phi(f_{1})=f_{1}, \phi(f_{2})=f_{2}, \phi(f_{3})=f_{3}, \phi(f_{4})=0$,\\
  are the tensor field then the following relationships may be easily verified by g's and $\phi$'s linearity properties:
\begin{equation}
	\eta(f_{4})=-1,~~~~\phi^{2}\zeta_{1}= \zeta_{1}+\eta(X)f_{4},~~~~	g(\phi \zeta_{1}, \phi \zeta_{2}) = g(\zeta_{1},\zeta_{2}) - \eta(\zeta_{1})\eta(\zeta_{2}).
\end{equation}
The Lie bracket's (non-vanishing) components are determined as follows:
\begin{equation}
	[f_{1},f_{4}]=af_{1},~~~~[f_{2},f_{4}]=af_{2}~~~~[f_{3},f_{4}]=af_{3}.
\end{equation}
The Koszul's formula yields for $f_{4}=\xi$
\begin{equation}
	\begin{split}
		&\nabla_{f_{1}}f_{1}=af_{4},~~\nabla_{f_{1}}f_{2}=0,~~\nabla_{f_{1}}f_{3}=0,~~\nabla_{f_{1}}f_{4}=af_{1}, \\&\nabla_{f_{2}}f_{1}=0,~~\nabla_{f_{2}}f_{2}=af_{4},~~\nabla_{f_{2}}f_{3}=0,~~\nabla_{f_{2}}f_{4}=af_{2},\\&\nabla_{f_{3}}f_{1}=0,~~\nabla_{f_{3}}f_{2}=0,~~\nabla_{f_{3}}f_{3}=af_{4},~~\nabla_{f_{3}}f_{4}=af_{3},\\&\nabla_{f_{4}}f_{1}=0,~~\nabla_{f_{4}}f_{2}=0,~~\nabla_{f_{4}}f_{3}=0,~~\nabla_{f_{4}}f_{4}=0.
	\end{split}
\end{equation}
The curvature components R, Ricci components S and the scalar tensor r are obtained respectively
\begin{equation}
	\begin{split}
		&R(f_{1},f_{2})f_{1}=-a^{2}f_{2},~~~R(f_{1},f_{3})f_{1}=-a^{2}f_{3},~~~R(f_{1},f_{4})f_{1}=-a^{2}f_{4},\\&R(f_{1},f_{2})f_{2}=a^{2}f_{1},~~~R(f_{2},f_{3})f_{2}=-a^{2}f_{3},~~~R(f_{2},f_{4})f_{2}=-a^{2}f_{4},\\&R(f_{1},f_{3})f_{3}=a^{2}f_{1},~~~R(f_{2},f_{3})f_{3}=a^{2}f_{2},~~~R(f_{3},f_{4})f_{3}=-a^{2}f_{4},\\&R(f_{1},f_{4})f_{4}=-a^{2}f_{1},~~~R(f_{2},f_{4})f_{4}=-a^{2}f_{2},~~~R(f_{3},f_{4})f_{4}=-a^{2}f_{3}.
	\end{split}
\end{equation}
\begin{equation}\label{10.77}
	S(f_{1},f_{1})=	S(f_{2},f_{2})=	S(f_{3},f_{3})= 3a^{2},~~~~	S(f_{4},f_{4})= -3a^{2},
\end{equation}
\begin{equation}\label{10.79}
	r= 6a^{2}.
\end{equation}
If we consider $\lambda=3a^{2}(\beta-\alpha)$ and the values from the equation (10.8) and (10.9), equation (3.1) satifies.\\ Hence, it is an example of four-dimensional $(\alpha,\beta)-(RYS)_4$ admitting $(LPS)_4$.\\

Using theorem (3.1) in $\lambda=3a^{2}(\beta-\alpha)$, we get
\begin{equation}\label{10.78}
	 \lambda=-3\beta a^{2}.
\end{equation}
Equating (\ref{10.78}) and (\ref{eqn. 3.18}), yield
 \begin{equation}\label{10.80}
 	a=\pm 1.
 \end{equation}
Hence, for $a=\pm 1$, the defined metric in (10.1) and the values obtained in (10.8) and (10.9) satisfy the theorem (3.1) and theorem (3.2).
\section*{Conclusion}
The Lorentzian metric and its generalizations play very important role in the study of cosmology. The perfect fluid, that depends on the lorentzian metric, is an example to represents the universe. Here, we study the soliton metric on Lorentzian manifold and discuss some fruitful investigation. Specially, the cosmological models have been discussed and the length of Ricci operator has been obtained and proved that spacetime is vaccum. The Poisson structure is an interesting topic to investigate in cosmology and mathematical physics. The soliton function has been discussed under the two different conditions, whether the scalar curvature is constant or not.
\section*{Acknowledgement}
There does not any financial support provide to conduct this research.\\
All the authors made equal contribution to complete the manuscript.

\end{document}